\newcommand{\x}{{\mathbf x}}
\newcommand{\p}{{\mathbf p}}
\newcommand{\e}{{\mathbf e}}
\newcommand{\y}{{\mathbf y}}
\newcommand{\z}{{\mathbf z}}
\newcommand{\bu}{{\mathbf u}}
\newcommand{\bv}{{\mathbf v}}
\newcommand{\ba}{{\mathbf a}}
\newcommand{\bb}{{\mathbf b}}
\newcommand{\icol}[1]{
	\left(\begin{smallmatrix}#1\end{smallmatrix}\right)
}
\newcommand{\R}{{\mathbb R}}
\newcommand{\Rnxn}{{\R^{n\times n}}}
\newcommand{\bmat}[1]{ \begin{bmatrix}#1\end{bmatrix}}
\newcommand{\Rn}{{\R^n}}
\newcommand{\Paths}{{\mathcal P}}
\newcommand{\LL}{{\mathcal L}}
\newcommand{\itext}[1]{{\qquad\text{#1}\qquad}}
\renewcommand{\ss}{\scriptstyle}
\def\sddots{\mathinner{\raise3pt\vbox{\hbox{$\ss .$}}
		\raise1.5pt\hbox{$\ss .$}\hbox{$\ss .$}}}
\let\hat\widehat
\theoremstyle{plain}
\newtheorem{thm}{Theorem}[section]
\newtheorem{lem}[thm]{Lemma}
\newtheorem{cor}[thm]{Corollary}
\newtheorem{fact}[thm]{Fact}
\newtheorem{summ}[thm]{Summary}
\theoremstyle{definition}
\newtheorem{rem}[thm]{Remark}
\begin{document}

\title{Computing Optimal Trajectories for Optimal Transport in Nonuniform Environments}

\author[Dieci]{Luca Dieci}
\address{School of Mathematics, Georgia Institute of Technology,
Atlanta, GA 30332 U.S.A.}
\email{dieci@math.gatech.edu}
\author[Omarov]{Daniyar Omarov}
\address{Department of Mathematical and Statistical Sciences, University of Alberta, Edmonton, AB T6G 2G1 Canada}
\email{daniyar@ualberta.ca}
\subjclass{49K15, 49Q22, 65K99, 90B80}

\null\hfill {\fontsize{10}{10pt} \selectfont 
Version of \today }

\keywords{Optimal transport, numerical computation, optimal trajectory, calculus of variations, necessary and sufficient optimality condition, assignment problem, Sinkhorn method}

\begin{abstract}
In this work, we solve a discrete optimal transport problem in a nonuniform environment.
To solve the optimal transport problem, we build the cost matrix
and then use classical solvers for discrete optimal transport.  
The challenge is to form the cost matrix, which requires finding the optimal path between two points, and for this task we formulate and solve the associated Euler-Lagrange equations.  A main contribution of ours is to provide verifiable sufficient conditions of optimality of the solution of the Euler-Lagrange equation and to propose new algorithms to to check optimality a-posteriori, thus validating the (exact) computation of the cost matrix. We illustrate our results and performance of the algorithms on several numerical examples in 2 and 3 dimensions.
\end{abstract}

\maketitle

\pagestyle{myheadings}
\thispagestyle{plain}
\markboth{L.~Dieci, D.~Omarov}{Optimal Trajectories for Optimal Transport}

{\bf Notation}.  Vectors are indicated with boldface and matrices with capital letters.  For a symmetric matrix $A$, we will write $A\succ 0$ to signify that $A$ is positive definite.  The standard basis of $\Rn$ is indicated with $\{\e_i\}_{i=1}^n$.  A continuous function $\x:\ [0,1]\to \Rn$ will be called a trajectory, or a path.  The set $\Paths(\ba,\bb)$ will be the set of differentiable trajectories from $\ba\in \Rn$ to $\bb\in \Rn$.  In particular, if $\x\in \Paths(\ba,\bb)$, then $\x(0)=\ba$ and $\x(1)=\bb$.  A smooth and strictly positive real valued scalar function $K(\x)$, from $\Rn$ to $\R^+$, is called a weight (or a kernel).  The norm of a vector $\x$ will always be the Euclidean norm, unless otherwise stated.

\section{Introduction}\label{sec:intro}
Motivated by solving optimal transport problems in the presence of obstacles, in this work we consider solving a discrete optimal transport of masses in a nonuniform environment.  More specifically, we will consider the case where the cost function $c(\ba, \bb)$ to move one unit of mass from location $\ba\in \Rn$ to location $\bb\in \Rn$
is given as the solution of a problem of calculus of variations of the form
\begin{equation}\label{CV}
c(\ba,\bb)= \inf_{\x(t)\in \Paths(\ba,\bb} \int_0^1 \LL(\x,\dot \x) dt\ ,
\end{equation}
and we are interested in the special forms of $\LL$ given in \eqref{length-cost} and 
\eqref{energy-cost}.
From the theoretical point of view, the model \eqref{CV} is a classical problem in Calculus of Variations (see Section \ref{setup} below), and it is considered also by Villani in his book on Optimal Transport, \cite[Chapter 7]{VillaniOldNew}.  Our goal in this work is numerical, that is we want to compute the optimal transport plan of moving $k_0$ point masses $\x_i$'s into $k_1$ point masses $\y_j$'s, where $\x_i$'s and $\y_j$'s are points in some subset of $\R^n$, when the cost of moving one into the other is given by \eqref{CV}.

In a uniform environment, the case that is usually considered in the literature, the transportation cost $c(\x,\y)$ is typically given by a $p$-norm, $c(\x,\y)=\|\x-\y\|_p$, $1<p<\infty$, the $2$-norm $\|\x-\y\|_2$, or simply $\|\x-\y\|$, being the most obvious choice, and also by the $2$-norm square, $c(\x,\y)=\frac12 \|\x-\y\|_2^2$, a case that is known as the $2$-Wasserstein distance, or simply Wasserstein distance.   So, in a uniform environment in $\Rn$, computation of the cost is straightforward.   But, in a nonuniform environment, the cost will generally depend on the path followed to go from $\x$ to $\y$, and this makes the computation of $c(\x,\y)$ a challenge in its own rights.

Optimal transport, in one of its several variants, has been receiving a lot of attention in recent decades, in no small part because of its flexibility to adapt to many problems of seemingly different nature, such as the Schr\"odinger bridge problem, unbalanced optimal transport, transport in the presence of physical constraints, and the use of optimal transport in Machine Learning (e.g., see \cite{PMLR-Neklyudov}).  Our work is particularly motivated by studying optimal transport in the presence of physical constraints, obstacles.
The study of optimal transport in the presence of obstacles has received both theoretical and computational attention; for example, see \cite{Cavalletti} for theoretical results, and \cite{BarrettPrigozhin, OT-convex-obstacle} for computational techniques.  In particular, in \cite{BarrettPrigozhin} Barrett and Prigozhin are concerned with transporting a mass into another, in the plane, by minimizing the cost given by the Euclidean distance when there is an obstacle along straight lines paths, and their computational approach consists of a regularization of the cost, followed by a finite element discretization of the underlying Monge-Ampere PDE.  Presence of an obstacle is one instance of a nonuniform environment, which is the general case we consider, and we do it in the context of a discrete optimal transport problem so that we end up needing to compute optimal trajectories between two points (in the plane or in space) subject to a weighted ``distance'' function.  This very task bears similarities with that of obtaining geodesics on general Riemannian manifolds (e.g., see \cite[Section 2.C]{RiemannGeo}), as we will exemplify in the body of the paper.  

A plan of the paper is as follows.  In Section \ref{setup} below, we review the basic setup of the discrete optimal transport problem and discuss the two choices of cost we consider: the length and the energy costs. In Section \ref{OptPath}, we derive the Euler-Lagrange equations for the cases of interest to us, recall a well known equivalence result between the two cases considered, and discuss its implications for algorithmic development.  We also give an important theoretical result establishing sufficient conditions for optimality; we will exploit this result computationally to rigorously verify optimality.  In Section \ref{algos}, we outline the algorithms adopted in full details, and in Section \ref{results} we present simulation results on several examples, highlighting performance on a number of scenarios.

\subsection{Setup of the problem and our tasks}\label{setup}
At a high level, optimal transport is concerned with moving a probability measure $\mu$, with support $X\subset \Rn$, into another probability measure $\nu$, with support $Y\subset \Rn$, while minimizing a given cost function $c(\x,\y):\  X\times Y \to \R^+$.  
The Monge-Kantorovich (MK) problem consists in determining the optimal transport (or transfer) plan, that is the
joint probability density function $\pi$ $\in {\mathcal{M}}(X \times Y)$, with marginals $\mu$ and
$\nu$, which realizes the $\inf$ of
the functional $J(\pi)$ below:
\begin{equation}\label{InfOT}
	\inf_{\pi} J  := \int_{X\times Y} c(\x,\y) \,d\pi = \int_{X\times Y} c(\x,\y) \,\pi(\x,\y) d\x d\y\,,
\end{equation}
subject to the constraints
$$\mu(\x)=\int_Y\pi(\x,\y)d\y\ ,\quad \nu(\y)=\int_X\pi(\x,\y) d\x\ .$$

\subsubsection{Discrete Optimal Transport}\label{OT}
This is the most commonly studied setup of an optimal transport problem.

Here, $\mu$ and $\nu$ are discrete probability measures on $X = \{\x_i\}_{i=1}^{k_0}$ and $Y = \{\y_j\}_{j=1}^{k_1}$:
\begin{equation}\label{measures}
	\mu := \sum_{i=1}^{k_0}\mu_{i}\delta(\x_i)\ ,\ \ \nu := \sum_{j=1}^{k_1}\nu_{j}\delta(\y_j)\ ,
\end{equation}
where $\mu_i> 0$, $\forall i$, $\nu_j> 0$, $\forall j$, and $\sum_{i=1}^{k_0}\mu_i = \sum_{j=1}^{k_1}\nu_j=1$.

The MK problem effectively becomes a linear programming problem.  Given a cost matrix $C\in \R^{k_0\times k_1}$ of entries $c(\x_i,\y_j)$, 
the optimal transport plan $\pi$ is itself a matrix with nonnegative entries in $\R^{k_0\times k_1}$ that minimizes $J(\pi)$ below:
\begin{equation}\label{DiscreteMK}
	\begin{cases}
		& J(\pi)  := \sum_{i=1}^{k_0}\sum_{j=1}^{k_1} c(\x_i,\y_j) \pi_{ij}\ ,\ \text{subject to }  \\
		&\sum_{j=1}^{k_1} \pi_{ij} = \mu_i,\ i=1,\dots,k_0,\
		\text{and }\sum_{i=1}^{k_0} \pi_{ij} = \nu_j,\ j=1,\dots, k_1\ .
	\end{cases}
\end{equation}
Our numerical method will consist of forming the cost matrix $C=(c(\x_i,\y_j))_{i=1:k_0, j=1:k_1}$ and then using well developed techniques to solve \eqref{DiscreteMK}, namely an assignment solver or the Sinkhorn algorithm,  see Section \ref{DiscOT}.  So doing, we need to effectively form and store the cost matrix.  Of course, storing the matrix $C$ may not be feasible if $k_0$ and $k_1$ are truly large; but, for the values of $k_0$ and $k_1$ we have in mind (say, in the order of thousands) this is not a concern.  As far as forming $C$, this will be discussed in details below.  Presently, we just observe that forming $C$ is a very parallelizable task, since the entries $c(\x_i,\y_j)$ are independent of each other.  So, the cost of computing $C$ is proportional to that of computing one trajectory (assuming that computation of trajectories costs roughly the same) times $k_0k_1$ and divided by the number of processors we have.  Moreover, we want to form $C$ ``exactly'' in nonuniform environments, and this will require carefully validated computation of the $c(\x_i,\y_j)$'s.  
Of course, there are alternatives to our approach that avoid forming $C$ directly. For example, the authors of \cite{SolomonEtAl} -- who are concerned with computer graphics applications and large values of $k_0$ and $k_1$ -- develop a method that considers regularized Wasserstein distances by computing optimal transportation plans through iterative kernel convolutions, by adapting the approach of \cite{BB}; they give impressive graphical displays of their results, but their work lacks a convergence analysis and it does not appear to be immediately applicable to nonuniform environments.
		
\subsubsection{Cost Matrix}\label{WhatCost}
To set up the cost matrix, the first thing to do is 
to select a cost $c(\ba,\bb)$. We consider two costs, the ``length'', and the ``energy'' costs, which generalize the Euclidean and the Wasserstein distances, respectively.

{\bf Agreement}.  In this work, the functional $K$ below is as smooth as required to validate differentiating it as often as needed.

{\bf The length cost}.  This is given by
\begin{equation}\label{length-cost}
c(\ba,\bb)=\min_{\x\in \Paths(\ba,\bb)} \int_0^1 K(\x(t))\  \|\dot \x(t)\| dt  	\  ,
\end{equation}
where the norm is the Euclidean norm, and $K$ is a positive function 
characterizing the environment, that is its geometry.   If $K=1$ (or any fixed constant, for what matters), we have a {\em uniform environment}, otherwise a {\em nonuniform environment}.  Clearly, if $K=1$, we are just looking at the arc-length of the path from $\ba$ to $\bb$, whereas if $K$ is not constant we have a weighted arc-length of the path.  It is a simple exercise in calculus of variations (e.g., see \cite{Liberzon}) that, when $K=1$, the shortest path is a straight line: $\x(t)=\ba+t(\bb-\ba)$, $0\le t\le 1$, so that the cost is precisely the Euclidean distance from $\ba$ to $\bb$: $c(\ba,\bb)=\|\ba-\bb\|$, as one should expect in a uniform environment.  

{\bf The energy cost}  This is given by 
\begin{equation}\label{energy-cost}
	c(\ba,\bb)=\min_{\x\in \Paths(\ba,\bb)} \int_0^1 \frac12 K^2(\x(t))\  \|\dot \x(t)\|^2 dt  	\  .
\end{equation}
In the case of a uniform environment, $K=1$, we are just looking at the average of kinetic energy along the path from $\ba$ to $\bb$.  It is immediate to see that, when $K=1$, the minimum is obtained at the straight line $\x(t)=\ba+t(\bb-\ba)$, $0\le t\le 1$, so that the cost is precisely the Wasserstein distance: $c(\ba,\bb)=\frac12 \|\ba-\bb\|^2$, as we expected in a uniform environment.

\section{Optimal Trajectory}\label{OptPath}
Our main concern in this section is to solve the minimization problem for the optimal trajectory from $\ba$ to $\bb$ for $\x$-dependent weight function $K(\x)$, which we need to do in order to setup the cost matrix. With respect to \eqref{CV}, $\LL(\x,\dot \x)$ is either the integrand in \eqref{length-cost} or \eqref{energy-cost}.
Our task is to find the trajectory $\x\in \Paths(\ba,\bb)$ which gives the minimum value below
\begin{equation}\label{Lagr}
	\min_{\x\in \Paths(\ba,\bb)} \int_0^1 \LL(\x,\dot \x) dt\ .
\end{equation}
Regardless of whether $\LL$ is given by \eqref{length-cost} or by \eqref{energy-cost}, the minimizing path will be called a {\em geodesic}; this naming will be justified by Fact \ref{Equivalence} below.

\subsection{Necessary condition: Euler-Lagrange equations}\label{EL}
The problem formulated in \eqref{Lagr} is a classical problem of {\emph{Calculus of Variations}} and the search for a minimizer follows the approach based on solving the Euler-Lagrange equations, which are first order necessary conditions for obtaining a minimum. Naturally, the derivation of the Euler-Lagrange equations below is a standard textbook procedure, but since we have not explicitly found the precise form of the
equations for the Lagrangian $\LL$ given by \eqref{length-cost} or by \eqref{energy-cost}, and their derivation is short, we provide details.
For \eqref{Lagr}, the Euler-Lagrange equations are:
\begin{equation}\label{EulerLagrange}
	\frac{\partial \LL}{\partial x_i} - \frac{d}{dt}\bigg{[}\frac{\partial \LL}{\partial \dot x_i}\bigg{]} = 0\ ,\  i=1,2,\dots,n\ ,
\end{equation}
or $\sum_{j=1}^n\LL_{\dot x_i\dot x_j}\ddot x_j + \sum_{j=1}^n\LL_{\dot x_ix_j}\dot x_j - \LL_{x_i} = 0$, for all $i$, or more compactly as
\begin{equation}\label{EL-Lagr}
	[D^2_{\dot \x}\LL]\ddot \x + [D_{\dot \x \x}\LL] \dot \x = \nabla_{\x}\LL\ ,
	\quad \x(0)=\ba,\, \ \x(1)=\bb\ ,
\end{equation}
where $D^2_{\dot \x} \LL \in \Rnxn$, $(D^2_{\dot \x} \LL)_{ij} = \LL_{\dot x_i\dot x_j}$, and
$D_{\dot \x \x}\LL \in \mathbb{R}^{n\times n}$, $(D_{\dot \x\x}\LL)_{ij} = \LL_{\dot x_ix_j}$.
We can further simplify \eqref{EL-Lagr} in the two cases of interest to us.

\subsubsection{Euler-Lagrange for length cost}
Here, 
\begin{equation}\label{LagrLength}
\LL(\x,\dot \x)=K(\x)\|\dot \x\|\ .
\end{equation}
\begin{lem}
For \eqref{LagrLength}, the differential equation in \eqref{EL-Lagr} rewrites as
\begin{equation}\label{EL-length-0}
\frac{K(\x)}{\|\dot \x\|^3} \bigg{[}\|\dot \x\|^2I - \dot \x (\dot \x)^T\bigg{]}\ddot \x + \frac{1}{\|\dot \x\|} \bigl((\nabla K)^T\dot \x\bigr) \dot \x = \|\dot \x\|\nabla K\ .
\end{equation}
\end{lem}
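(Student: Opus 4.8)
The plan is to prove \eqref{EL-length-0} by direct substitution: plug the explicit Lagrangian $\LL(\x,\dot\x)=K(\x)\|\dot\x\|$ from \eqref{LagrLength} into each of the three objects appearing in \eqref{EL-Lagr}, namely $\nabla_\x\LL$, $D^2_{\dot\x}\LL$, and $D_{\dot\x\x}\LL$, and simplify. Throughout I would set $v\dfn\|\dot\x\|=(\dot\x^T\dot\x)^{1/2}$ and assume $v>0$ along the trajectory (i.e. a regular parametrization), since otherwise the terms below are not defined; this is harmless because one is free to reparametrize. The two elementary facts I will use repeatedly are $\nabla_{\dot\x} v=\dot\x/v$ and the Hessian of the Euclidean norm, $D^2_{\dot\x}v=\tfrac1v\bigl(I-\dot\x\dot\x^T/v^2\bigr)=\tfrac{1}{v^3}\bigl(v^2I-\dot\x\dot\x^T\bigr)$, together with the fact that $K=K(\x)$ does not depend on $\dot\x$ and $v=\|\dot\x\|$ does not depend on $\x$.

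First I would handle the right-hand side: since $\LL=K(\x)v$ and $v$ is independent of $\x$, we get $\nabla_\x\LL=v\,\nabla K=\|\dot\x\|\,\nabla K$, which is exactly the right-hand side of \eqref{EL-length-0}. Next, for the coefficient of $\ddot\x$: from $\partial\LL/\partial\dot x_i=K(\x)\,\dot x_i/v$ we have $\nabla_{\dot\x}\LL=(K(\x)/v)\,\dot\x$, and differentiating once more in $\dot\x$ (with $K$ constant in $\dot\x$) gives
\[
D^2_{\dot\x}\LL=K(\x)\,D^2_{\dot\x}v=\frac{K(\x)}{\|\dot\x\|^3}\bigl(\|\dot\x\|^2 I-\dot\x(\dot\x)^T\bigr),
\]
which is precisely the first coefficient in \eqref{EL-length-0}. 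Finally, for the mixed term, I differentiate $\partial\LL/\partial\dot x_i=K(\x)\,\dot x_i/v$ with respect to $x_j$; since $v$ is independent of $\x$, $(D_{\dot\x\x}\LL)_{ij}=(\partial K/\partial x_j)\,\dot x_i/v$, i.e. $D_{\dot\x\x}\LL=\tfrac1v\,\dot\x(\nabla K)^T$, so that $[D_{\dot\x\x}\LL]\,\dot\x=\tfrac1v\bigl((\nabla K)^T\dot\x\bigr)\dot\x$, which is the middle term of \eqref{EL-length-0}. Adding the three pieces according to \eqref{EL-Lagr} yields \eqref{EL-length-0}.

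I do not expect a genuine obstacle here: the computation is entirely mechanical, and the only points requiring care are the formula for the Hessian of $\|\dot\x\|$ and keeping track of which variables each factor depends on, plus the standing assumption $\|\dot\x\|\ne 0$. As a consistency check one can note that for $K\equiv\text{const}$ the equation reduces to $\bigl(\|\dot\x\|^2 I-\dot\x\dot\x^T\bigr)\ddot\x=0$, i.e. the acceleration is parallel to the velocity, which is satisfied by straight-line paths, in agreement with the remark following \eqref{length-cost}.
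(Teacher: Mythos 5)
Your proposal is correct and follows essentially the same route as the paper: compute $\nabla_\x\LL$, $D^2_{\dot\x}\LL$, and $D_{\dot\x\x}\LL$ for $\LL=K(\x)\|\dot\x\|$ and substitute into \eqref{EL-Lagr}, the only cosmetic difference being that you invoke the vectorized Hessian formula for $\|\dot\x\|$ where the paper computes the entries $\LL_{\dot x_i\dot x_j}$ componentwise. Your explicit regularity assumption $\|\dot\x\|\neq 0$ and the consistency check at $K\equiv\text{const}$ are sensible additions but do not change the argument.
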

\begin{proof}
We have
$\LL_{x_i} = K_{x_i}(\x)\|\dot \x\|$, $\LL_{\dot x_i} = \frac{K(\x)\dot x_i}{\|\dot \x\|}$, $\LL_{\dot x_i x_j} = \frac{K_{x_j}(\x)\dot x_i}{\|\dot \x}$,
$\LL_{\dot x_i\dot x_i} =K(\x)\frac{\sum_{j=1,j\neq i}^n[\dot x_j^2]}{\|\dot \x\|^3}$ and $\LL_{\dot x_i\dot x_j} = -K(\x)\frac{\dot x_i\dot x_j}{\|\dot \x\|^3}$ for $j\neq i$.  Therefore, 
$$	D_{\dot \x\x}\LL=
\frac{1}{\|\dot \x\|}\dot \x(\nabla K)^T \quad \text{and}
\quad  D^2_{\dot \x}\LL = \frac{K(\x)}{\|\dot \x\|^3} \bigg{[}\|\dot \x\|^2I - \dot \x (\dot \x)^T\bigg{]}\ . $$
Using these in \eqref{EL-Lagr}, we obtain \eqref{EL-length-0}.
\end{proof}

The difficulty now is that in \eqref{EL-length-0} the symmetric matrix multiplying $\ddot \x$ is singular, so it cannot be just inverted to obtain a differential equation for $\ddot \x$.   In fact, we have the following result.
\begin{lem}\label{LinAlg}
The symmetric matrix $\frac{K(\x)}{\|\dot \x\|^3} \bigl[ \|\dot \x\|^2I - \dot \x (\dot \x)^T\bigr]$ has a simple eigenvalue equal to $0$, associated to the eigenvector $\dot \x$, all other eigenvalues being equal to $\frac{K(\x)}{\| \dot \x\|}$.
\end{lem}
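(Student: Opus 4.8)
The plan is to treat the matrix as a positive scalar multiple of a rank-one modification of the identity and to exhibit a full set of eigenvectors by inspection. Write $\bv \dfn \dot\x$, which is nonzero along a regular geodesic (this is the only standing hypothesis the statement needs, since otherwise the matrix is undefined), and set $c \dfn K(\x)/\|\dot\x\|^3 > 0$, so that the matrix in question is $M = c\bigl(\|\bv\|^2 I - \bv\bv^T\bigr)$. The first step is to apply $M$ to $\bv$ itself: using $\bv^T\bv = \|\bv\|^2$ one gets $M\bv = c\bigl(\|\bv\|^2\bv - \bv(\bv^T\bv)\bigr) = 0$, so $\dot\x$ is an eigenvector with eigenvalue $0$.

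Next I would take an arbitrary $\bu \in \Rn$ with $\bu \perp \bv$. Then $\bv^T\bu = 0$, hence $M\bu = c\|\bv\|^2\bu = \dfrac{K(\x)}{\|\bv\|}\,\bu$. Thus the entire $(n-1)$-dimensional subspace $\bv^\perp$ consists of eigenvectors of $M$ with eigenvalue $K(\x)/\|\dot\x\|$. (Equivalently, one may note that $I - \bv\bv^T/\|\bv\|^2$ is the orthogonal projector onto $\bv^\perp$, which makes both computations transparent at once.)

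Finally, since $M$ is real symmetric, $\Rn$ is the orthogonal direct sum of its eigenspaces; the two steps above produce the splitting $\Rn = \spn\{\bv\} \oplus \bv^\perp$ into eigenspaces for the eigenvalues $0$ and $K(\x)/\|\dot\x\|$, respectively. Because $K(\x)/\|\dot\x\| > 0$, the value $0$ is attained only on $\spn\{\dot\x\}$, i.e. it is a simple eigenvalue with eigenvector $\dot\x$, while $K(\x)/\|\dot\x\|$ has multiplicity $n-1$, which is exactly the claim.

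There is no genuine obstacle here: the argument is a direct eigenvector computation, and the only point deserving a word of care is the regularity assumption $\dot\x\neq 0$. One could instead expand $\det(M - \lambda I)$ by the matrix-determinant lemma to read off the spectrum, but the direct approach is cleaner and, more importantly, it also identifies the eigenvectors explicitly — information that is precisely what is needed afterwards to desingularize \eqref{EL-length-0}.
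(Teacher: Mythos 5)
Your proof is correct and follows essentially the same route as the paper: verify directly that $\dot\x$ is in the kernel, then exhibit an $(n-1)$-dimensional eigenspace for the eigenvalue $K(\x)/\|\dot\x\|$. The only difference is in how that second eigenspace is produced: the paper picks the specific vectors $\dot x_j\e_1-\dot x_1\e_j$, whereas you observe that all of $\dot\x^{\perp}$ works because $\|\bv\|^2I-\bv\bv^T$ is $\|\bv\|^2$ times the orthogonal projector onto $\bv^{\perp}$. Your version is in fact slightly more robust, since the paper's vectors are linearly independent only when $\dot x_1\neq 0$, while your argument needs no such choice of coordinate; you also correctly flag $\dot\x\neq 0$ as the one standing hypothesis. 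No gaps.
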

\begin{proof}
The fact that $0$ is an eigenvalue associated to the eigenvector $\dot \x$ is an immediate verification.  The other part of the result follows by considering the linearly independent vectors $\dot x_j\e_1-\dot x_1 \e_j$ and verifying  that each of these is an eigenvector with eigenvalue $\frac{K(\x)}{\| \dot \x\|}$.
\end{proof}

To rewrite \eqref{EL-length-0} in a way that gives a well defined boundary value problem, we need to restrict to the orthogonal complement of $\dot \x$.
The algebra is lengthy, but not difficult, and in the end we obtain the following result.

\begin{lem}
If \eqref{EL-length} below is satisfied, then so is \eqref{EL-length-0}, and thus so is \eqref{EL-Lagr} for the cost \eqref{LagrLength}:
\begin{equation}\label{EL-length}
	\ddot \x = \frac{\|\dot \x\|^2}{K(\x)} \nabla K \ ,\quad \x(0)=\ba,\,\ \x(1)=\bb\ .
\end{equation}
\end{lem}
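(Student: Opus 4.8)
The strategy is to show that \eqref{EL-length} implies \eqref{EL-length-0} by direct substitution, exploiting the structure exposed in Lemma \ref{LinAlg}. Suppose $\x$ solves \eqref{EL-length}, so $\ddot\x = \frac{\|\dot\x\|^2}{K(\x)}\nabla K$. The first step is to record a consequence of this equation: dotting both sides with $\dot\x$ gives $(\dot\x)^T\ddot\x = \frac{\|\dot\x\|^2}{K(\x)}(\nabla K)^T\dot\x$. I would also want the companion identity $\frac{d}{dt}\|\dot\x\|^2 = 2(\dot\x)^T\ddot\x$, which combined with the previous line shows how $\|\dot\x\|$ evolves — this is not strictly needed for the algebraic substitution but clarifies why $\|\dot\x\|$ appears to various powers consistently.

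The second step is the substitution itself. Plug $\ddot\x = \frac{\|\dot\x\|^2}{K(\x)}\nabla K$ into the matrix term of \eqref{EL-length-0}:
\begin{equation*}
\frac{K(\x)}{\|\dot\x\|^3}\bigl[\|\dot\x\|^2 I - \dot\x(\dot\x)^T\bigr]\ddot\x
= \frac{K(\x)}{\|\dot\x\|^3}\cdot\frac{\|\dot\x\|^2}{K(\x)}\bigl[\|\dot\x\|^2\nabla K - \dot\x\bigl((\dot\x)^T\nabla K\bigr)\bigr]
= \frac{1}{\|\dot\x\|}\bigl[\|\dot\x\|^2\nabla K - \bigl((\nabla K)^T\dot\x\bigr)\dot\x\bigr].
\end{equation*}
Adding the second term on the left of \eqref{EL-length-0}, namely $\frac{1}{\|\dot\x\|}\bigl((\nabla K)^T\dot\x\bigr)\dot\x$, the two $\dot\x$-directed pieces cancel exactly, leaving $\|\dot\x\|\,\nabla K$, which is precisely the right-hand side of \eqref{EL-length-0}. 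Hence \eqref{EL-length-0} holds. Since the preceding lemma showed \eqref{EL-length-0} is equivalent to \eqref{EL-Lagr} for the Lagrangian \eqref{LagrLength}, and the boundary conditions $\x(0)=\ba$, $\x(1)=\bb$ are carried along unchanged, the chain of implications is complete.

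I do not expect a genuine obstacle here: the content of the lemma is really the observation that \eqref{EL-length-0}, when restricted to the orthogonal complement of $\dot\x$ (on which the singular matrix acts as $\frac{K(\x)}{\|\dot\x\|}I$ by Lemma \ref{LinAlg}), reduces to \eqref{EL-length}, and conversely any solution of \eqref{EL-length} automatically satisfies the full equation because the $\dot\x$-component of \eqref{EL-length-0} is vacuous — the matrix kills $\dot\x$ and the remaining two terms are both parallel to $\dot\x$ and cancel. The only mild care needed is the standing assumption $\|\dot\x\|\neq 0$ along the trajectory (a regular, non-stationary parametrization), so that all the divisions by powers of $\|\dot\x\|$ are legitimate; I would state this explicitly. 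It is worth remarking, though I would keep this brief, that \eqref{EL-length} is not the unique reduction: one could add to $\ddot\x$ any multiple of $\dot\x$ and still satisfy \eqref{EL-length-0}, reflecting the familiar reparametrization freedom of the length functional. The choice \eqref{EL-length} corresponds to a particular (natural) parametrization, and that is all that is claimed.
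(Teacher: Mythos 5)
Your proof is correct and is exactly the paper's argument: the paper's proof consists of the single line ``Substituting $\ddot\x$ from \eqref{EL-length} into \eqref{EL-length-0}, and simplifying, yields the result,'' and you have simply carried out that substitution explicitly, with the cancellation of the two $\dot\x$-directed terms shown. Your added remarks on the standing assumption $\|\dot\x\|\neq 0$ and on the reparametrization freedom (adding any multiple of $\dot\x$ to $\ddot\x$ preserves \eqref{EL-length-0}) are accurate and consistent with the surrounding discussion in the paper.
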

\begin{proof}
Substituting $\ddot \x$ from \eqref{EL-length} into \eqref{EL-length-0}, and simplifying, yields the result.
\end{proof}

\subsubsection{Euler-Lagrange for energy cost}
Here 
\begin{equation}\label{LagrEner}
\LL(\x,\dot \x)=\frac12 K^2(\x) \|\dot \x\|^2\ ,
\end{equation}
and it is easier to rewrite the boundary value problem \eqref{EL-Lagr}. 
\begin{lem}
For \eqref{LagrEner}, the system \eqref{EL-Lagr} is
\begin{equation}\label{EL-energy}
    \ddot \x = \frac{\|\dot \x\|^2}{K(\x)} \nabla K - \frac{2[\nabla K]^T \dot \x}{K(\x)}\dot \x \ ,\quad \x(0)=\ba,\,\ \x(1)=\bb\ .
\end{equation}
\end{lem}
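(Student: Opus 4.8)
The plan is to specialize the general boundary value problem \eqref{EL-Lagr} to the energy Lagrangian \eqref{LagrEner}, exactly as was done for the length cost. Concretely, I would compute the three ingredients appearing in \eqref{EL-Lagr} — the velocity Hessian $D^2_{\dot\x}\LL$, the mixed block $D_{\dot\x\x}\LL$, and the spatial gradient $\nabla_{\x}\LL$ — substitute them into \eqref{EL-Lagr}, and simplify. Since for this Lagrangian $D^2_{\dot\x}\LL$ turns out to be a strictly positive multiple of the identity (in contrast with the singular matrix arising from \eqref{LagrLength}), the substitution immediately produces an explicit second-order ODE for $\ddot\x$, with the boundary conditions $\x(0)=\ba$, $\x(1)=\bb$ carried over unchanged.

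First I would record the first partials. Writing $\LL=\frac12 K^2(\x)\sum_{k=1}^n\dot x_k^2$, one gets $\LL_{x_i}=K(\x)K_{x_i}(\x)\|\dot\x\|^2$ and $\LL_{\dot x_i}=K^2(\x)\dot x_i$, hence $\nabla_{\x}\LL=K(\x)\|\dot\x\|^2\,\nabla K$. Differentiating $\LL_{\dot x_i}=K^2(\x)\dot x_i$ in $\dot x_j$ gives $\LL_{\dot x_i\dot x_j}=K^2(\x)\delta_{ij}$, so $D^2_{\dot\x}\LL=K^2(\x)I$; differentiating it in $x_j$ gives $\LL_{\dot x_i x_j}=2K(\x)K_{x_j}(\x)\dot x_i$, i.e. the rank-one matrix $D_{\dot\x\x}\LL=2K(\x)\,\dot\x\,(\nabla K)^T$ (velocity index on the left factor, spatial index on the right). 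The only place where a little care is needed is the bookkeeping of the chain-rule factor $2$ coming from $\partial_{x_j}K^2=2KK_{x_j}$, together with the correct placement of $\dot\x$ versus $\nabla K$ in the outer product, so that $[D_{\dot\x\x}\LL]\dot\x=2K(\x)\bigl((\nabla K)^T\dot\x\bigr)\dot\x$.

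Substituting these into \eqref{EL-Lagr} yields $K^2(\x)\ddot\x+2K(\x)\bigl((\nabla K)^T\dot\x\bigr)\dot\x=K(\x)\|\dot\x\|^2\nabla K$, and since $K$ is strictly positive we may divide through by $K^2(\x)$ to obtain exactly \eqref{EL-energy}. I do not anticipate a genuine obstacle here: the argument is a direct specialization of \eqref{EL-Lagr}, and the step that was delicate in the length case — inverting $D^2_{\dot\x}\LL$ — is now trivial. As a sanity check one can also bypass \eqref{EL-Lagr} and expand the Euler–Lagrange equation \eqref{EulerLagrange} directly: $\frac{d}{dt}\bigl[K^2(\x)\dot x_i\bigr]=\LL_{x_i}$ expands to $2K(\x)(\nabla K\cdot\dot\x)\dot x_i+K^2(\x)\ddot x_i=K(\x)K_{x_i}(\x)\|\dot\x\|^2$, which after division by $K^2(\x)$ gives the same system \eqref{EL-energy}.
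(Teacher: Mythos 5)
Your proposal is correct and follows essentially the same route as the paper: compute $\LL_{x_i}=K K_{x_i}\|\dot\x\|^2$, $\LL_{\dot x_i}=K^2\dot x_i$, $D^2_{\dot\x}\LL=K^2 I$, $D_{\dot\x\x}\LL=2K\dot\x(\nabla K)^T$, substitute into \eqref{EL-Lagr}, and divide by $K^2>0$. The computations, including the chain-rule factor $2$ and the orientation of the rank-one outer product, all match the paper's proof.
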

\begin{proof}
We have
$\LL_{x_i} = K(\x)K_{x_i}(\x)\|\dot \x\|^2$, $\LL_{\dot x_ix_j} = 2K(\x)K_{x_j}(\x)\dot x_i$, and $\LL_{\dot x_i} = K^2(\x)\dot x_i$, so that
$\LL_{\dot x_i\dot x_i} =K^2(\x)$ and $\LL_{\dot x_i\dot x_j} = 0$ for $j\neq i$.  It follows that 
$$D_{\dot \x\x}\LL =2K(\x)\dot \x(\nabla K)^T,$$
so that the differential equation in \eqref{EL-Lagr} can be rewritten as:
$$K^2(\x) \ddot \x + 2K(\x)\dot \x[\nabla K]^T \dot \x = K(\x)\|\dot \x\|^2 \nabla K\ ,$$
which is exactly 
$$\ddot \x = \frac{\|\dot \x\|^2}{K(\x)} \nabla K - \frac{2(\nabla K)^T \dot \x}{K(\x)}\dot \x\ ,$$
as claimed.
\end{proof}
An interesting and useful fact about solutions of \eqref{EL-energy} is that they have constant speed.
\begin{cor}\label{const-speed}
If the trajectory $\y(t)$ is a solution of the Euler-Lagrange equation
\eqref{EL-energy}, then $\y(t)$ has constant speed:
$$K(\y) \| \dot \y\| = \text{constant}\ .$$
\end{cor}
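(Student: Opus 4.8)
The plan is to use the first-integral identity \eqref{FirstIntegral}, which holds whenever the Euler-Lagrange equations are satisfied and the Lagrangian carries no explicit $t$-dependence; both are the case here, since by hypothesis $\y$ solves \eqref{EL-energy}, hence \eqref{EL-Lagr} and hence \eqref{EulerLagrange}, and $\LL$ in \eqref{LagrEner} depends only on $\x$ and $\dot\x$. First I would observe that the energy Lagrangian $\LL(\x,\dot\x)=\frac12 K^2(\x)\|\dot\x\|^2$ is homogeneous of degree two in $\dot\x$, so that Euler's identity for homogeneous functions gives $\sum_{i=1}^n \dot x_i\,\LL_{\dot x_i} = 2\LL$; indeed $\LL_{\dot x_i}=K^2(\x)\dot x_i$, so $\sum_i \dot x_i\,\LL_{\dot x_i}=K^2(\x)\|\dot\x\|^2=2\LL$ directly. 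Substituting this into the conserved quantity $\sum_i \dot x_i\,\LL_{\dot x_i}-\LL$ from \eqref{FirstIntegral} collapses it to $\LL$ itself, so $\LL(\y,\dot\y)=\tfrac12 K^2(\y)\|\dot\y\|^2$ is constant along the trajectory; since $K>0$, taking square roots yields $K(\y)\|\dot\y\|=\text{constant}$, which is the claim.

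Alternatively — and this is the cheapest fully self-contained route — I would verify the claim by direct differentiation, relying only on \eqref{EL-energy}. Compute $\frac{d}{dt}\bigl[K^2(\y)\|\dot\y\|^2\bigr] = 2K(\y)\bigl((\nabla K)^T\dot\y\bigr)\|\dot\y\|^2 + 2K^2(\y)\,\dot\y^T\ddot\y$, then substitute $\ddot\y$ from \eqref{EL-energy}. The inner product $\dot\y^T\ddot\y$ becomes $\frac{\|\dot\y\|^2}{K(\y)}(\nabla K)^T\dot\y - \frac{2(\nabla K)^T\dot\y}{K(\y)}\|\dot\y\|^2 = -\frac{\|\dot\y\|^2(\nabla K)^T\dot\y}{K(\y)}$, so $2K^2(\y)\,\dot\y^T\ddot\y = -2K(\y)\|\dot\y\|^2(\nabla K)^T\dot\y$, which exactly cancels the first term. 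Hence $\frac{d}{dt}\bigl[K^2(\y)\|\dot\y\|^2\bigr]=0$, and the conclusion follows as before.

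There is essentially no obstacle here; the only point needing a moment's care is justifying that \eqref{FirstIntegral} is available, i.e. that the hypothesis ``$\y$ solves \eqref{EL-energy}'' does imply the Euler-Lagrange equations from which \eqref{FirstIntegral} was derived — but this is precisely the content of the preceding lemma identifying \eqref{EL-Lagr} with \eqref{EL-energy} for the energy Lagrangian. If one wishes to sidestep even that dependency, the direct-differentiation route is complete on its own. I would present the direct computation as the proof and mention the first-integral interpretation as a remark, since it explains conceptually why constant speed is forced: it is the conservation law associated with the time-translation invariance of the action.
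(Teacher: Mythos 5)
Your direct-differentiation argument is exactly the paper's proof: the authors take the dot product of \eqref{EL-energy} with $K\dot\y$ to get $K\ddot\y^T\dot\y=-\|\dot\y\|^2(\nabla K)^T\dot\y$, which is precisely your cancellation showing $\frac{d}{dt}\bigl[K^2\|\dot\y\|^2\bigr]=0$. Your first-integral route via Euler's identity for the degree-$2$ homogeneity of $\LL$ in $\dot\x$ (so that the conserved quantity $\sum_i\dot x_i\LL_{\dot x_i}-\LL$ collapses to $\LL$ itself) is an equally valid and conceptually cleaner alternative, correctly tied back to \eqref{FirstIntegral} through the lemma identifying \eqref{EL-energy} with \eqref{EL-Lagr}.
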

\begin{proof}
Since $\y$ satisfies
$$\ddot \y = \frac{\|\dot \y\|^2}{K(\y)} \nabla K(\y) - \frac{2[\nabla K(\y)]^T \dot \y}{K(\y)}\dot \y\ ,$$
then, taking the dot-product of this relation with $K\dot \y$, we obtain
$$K\ddot \y^T\dot \y = - \|\dot \y\|^2[\nabla K]^T \dot \y\quad \text{or} \quad \frac{d}{dt}[K^2\|\dot \y\|^2] = 0\ , $$
as claimed.
\end{proof}
\begin{rem}\label{const-speed-rem}
In our numerical algorithms, we use Corollary \ref{const-speed} as an a-posteriori check to infer whether we have effectively solved \eqref{EL-energy}; see Sections \ref{algos} and \ref{results}.
\end{rem}

\subsection{Sufficient conditions: Conjugate points}\label{Sec:Riccati} 
As it is well understood, satisfying the Euler-Lagrange equation 
\eqref{EL-Lagr} gives an extremal path, but it is only a necessary condition for a minimizer.  The present concern is to obtain sufficient, and verifiable, conditions for having obtained a minimizer in \eqref{Lagr}.  The original work of  Gelfand and Fomin, \cite{Gelfand}, lays the groundwork for providing sufficient conditions by looking at positivity of the second variation.  But unfortunately the derivation of sufficient conditions in \cite{Gelfand} requires the matrix $\LL_{\x\dot \x}$ to be symmetric, which is not satisfied in our case; the same assumption is also made in more recent books on the topic (e.g., see \cite[p.19]{Jost} and \cite[p.54]{Liberzon}).  Sufficient conditions without requiring symmetry of $\LL_{\x\dot \x}$ are given by Zelikin in \cite{Zelikin}, as we will recall below.  To formulate these conditions, let us look at the 2nd variation of the functional $\LL$ along a trajectory $\x(t)$ that solves the Euler-Lagrange equation, in the cases of interest to us.  We further elucidates how the sufficient conditions can be constructively verified.

For the 2nd variation, we have
\begin{equation}\begin{split}\label{2ndVar0}
		\delta^2 \LL = \frac{1}{2} & \int_0^1\left( \y^T\LL_{\x\x} \y+\dot \y^T \LL_{\dot \x \dot \x}\dot \y +\dot \y^T\LL_{\dot \x \x}\y +\y^T\LL_{\x\dot \x}\dot \y\right)dt \\
		:= \frac{1}{2} & \int_0^1\left( \y^TA \y+\dot \y^T C\dot \y +\dot \y^TB^T \y +\y^TB\dot \y\right) dt\ , \\
		\text{where} & \quad  A=\LL_{\x\x},\ B=\LL_{\x\dot \x}, \ C=\LL_{\dot \x \dot \x}\ ,
\end{split}\end{equation}
and where $\y$ is a differentiable function from $[0,1]$ to $\Rn$ satisfying homogeneous boundary conditions, $\y(0)=\y(1)=\mathbf{0}$.
We rewrite \eqref{2ndVar0} in the compact form
\begin{equation}\label{2ndVar1}
	\delta^2 \LL =\frac12 \int_0^1\bmat{\y^T & \dot \y^T} \bmat{A & B \\ B^T & C} \bmat{\y \\ \dot \y} dt \ , \,\ \y(0)=\y(1)=\mathbf{0}\ ,
\end{equation}
and we stress that $A=A^T$ and $C=C^T$, but in general $B\ne B^T$.
According to classical results, we need the 2nd variation to be a positive quantity in order for the  extremal path $\x(t)$ to be a minimizer of the functional $\int_0^1 \LL(\x,\dot \x) dt$. The following rewriting of \eqref{2ndVar1} is useful to highlight how, and where, the lack of symmetry of $\LL_{\x\dot \x}$ shows up.

\begin{lem}\label{SecondVar}
The 2nd variation in \eqref{2ndVar1} rewrites as
\begin{equation}\label{2ndVar2}\begin{split}
\delta^2 \LL & =\frac12 \int_0^1 \bmat{\y^T & \dot \y^T} \bmat{Q & E \\  E^T & P} \bmat{\y \\ \dot \y} dt\ , \\ 
\text{where} & \,\ P=P^T=C\ , \,\  
Q=Q^T=A-\frac{\dot B+\dot B^T}{2}\ ,\,\ E=\frac{B-B^T}{2}\ . 
\end{split}\end{equation}
\end{lem}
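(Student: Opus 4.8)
The plan is to leave the lower-right block of \eqref{2ndVar1} untouched and to rewrite the cross contribution $\dot\y^T B^T\y+\y^T B\dot\y$ as a quadratic form in $\y$ alone, by integrating by parts and killing the endpoint terms via $\y(0)=\y(1)=\mathbf 0$. So I would first expand \eqref{2ndVar1} as
\[
\delta^2\LL=\int_0^1\bigl(\y^T A\y+\dot\y^T C\dot\y\bigr)\,dt+\int_0^1\bigl(\dot\y^T B^T\y+\y^T B\dot\y\bigr)\,dt .
\]
The second block already gives the claimed $P=P^T=C$, so everything reduces to showing that the cross integral equals $-\int_0^1\y^T\frac{\dot B+\dot B^T}{2}\y\,dt$; adding this to $\int_0^1\y^T A\y\,dt$ then yields $\int_0^1\y^T Q\y\,dt$ with $Q=A-\frac{\dot B+\dot B^T}{2}$, and $Q=Q^T$, $P=P^T$ are automatic from $A=A^T$, $C=C^T$.

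The computation I would carry out for the cross term uses the product rule together with $\y(0)=\y(1)=\mathbf 0$:
\[
0=\int_0^1\frac{d}{dt}\bigl(\y^T B\y\bigr)\,dt=\int_0^1\bigl(\dot\y^T B\y+\y^T\dot B\y+\y^T B\dot\y\bigr)\,dt .
\]
Since $\dot\y^T B\y$ is a scalar it equals its own transpose $\y^T B^T\dot\y$, so the last display rearranges to $\int_0^1\y^T(B+B^T)\dot\y\,dt=-\int_0^1\y^T\dot B\y\,dt$; and since the quadratic form $\y^T M\y$ depends only on $\frac{M+M^T}{2}$, the right-hand side is $-\int_0^1\y^T\frac{\dot B+\dot B^T}{2}\y\,dt$. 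Substituting this into the expansion above and collecting the $\y$-only terms produces exactly \eqref{2ndVar2}. In short: expand, apply the product-rule identity, invoke scalar-transpose symmetry, and split $\dot B$ into its symmetric and skew parts, discarding the latter.

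The step I expect to be the genuine obstacle is reconciling the cross term with that identity when $B\neq B^T$. Written out literally, the off-diagonal contribution of the block matrix is $2\y^T B\dot\y$ (the two scalars $\dot\y^T B^T\y$ and $\y^T B\dot\y$ coincide), whereas the product-rule identity only controls $\y^T(B+B^T)\dot\y=2\y^T\frac{B+B^T}{2}\dot\y$; the gap between them is $\int_0^1\y^T(B-B^T)\dot\y\,dt$, which is a bona fide term here since $B-B^T\neq 0$ — for the length cost $B=\|\dot\x\|^{-1}(\nabla K)(\dot\x)^T$ is rank-one and far from symmetric. Thus the crux is to show that this skew-symmetric remainder does not contribute: either through a further integration by parts exploiting that $B-B^T$ is skew and $\y$ is clamped at the endpoints, or by restricting attention to the class of variations $\y$ that is actually relevant for testing minimality of the extremal. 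Making this precise is where I would expect the real work of the proof to lie.
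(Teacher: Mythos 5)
Your handling of the symmetric part of the cross term is correct and coincides with the paper's: integrating $\frac{d}{dt}(\y^TB\y)$ and $\frac{d}{dt}(\y^TB^T\y)$ against the clamped boundary conditions turns $\y^T(B+B^T)\dot\y$ into $-\y^T\frac{\dot B+\dot B^T}{2}\y$, which is where $Q=A-\frac{\dot B+\dot B^T}{2}$ comes from. The paper then disposes of the piece you single out, $\int_0^1\y^T(B-B^T)\dot\y\,dt$, by writing $\delta^2\LL$ in two block forms --- one with off-diagonal blocks $(B-B^T)/2$ and $(B^T-B)/2$, the other with these two blocks interchanged --- and averaging so that the off-diagonal blocks cancel. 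So the paper does claim to close exactly the gap you identify.

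You are right, however, to treat that skew remainder as the genuine obstacle, and you should not accept the averaging step at face value. The two scalars $\y^T(B-B^T)\dot\y$ and $\dot\y^T(B^T-B)\y$ are transposes of one another and hence equal; a quadratic form in $(\y,\dot\y)$ with prescribed diagonal blocks has a \emph{unique} symmetric off-diagonal block, which here is $(B-B^T)/2$ in the upper-right corner in \emph{both} of the paper's rewritings. Placing $(B^T-B)/2$ there in the second copy reverses the sign of the cross contribution, and it is only this sign reversal that makes the average of the two off-diagonal blocks vanish. The remainder itself does not integrate to zero in general: with $n=2$, constant $B=\e_1\e_2^T$ (so $\dot B=0$, $Q=A$) and $\y(t)=(\sin\pi t,\ \sin 2\pi t)$, one finds
\begin{equation*}
\int_0^1\y^T(B-B^T)\dot\y\,dt=\int_0^1\bigl(y_1\dot y_2-y_2\dot y_1\bigr)\,dt=-\tfrac{8}{3}\neq 0\ ,
\end{equation*}
whereas \eqref{2ndVar2} would predict no contribution from the cross term. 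The identity that the integration by parts actually yields is
\begin{equation*}
\delta^2\LL=\int_0^1\Bigl(\y^TQ\y+\dot\y^TP\dot\y+\y^T(B-B^T)\dot\y\Bigr)\,dt\ ,
\end{equation*}
and for the Lagrangians of this paper $B=\LL_{\x\dot\x}$ is rank one and genuinely nonsymmetric, so the extra term must either be shown harmless for these specific $B$ or carried through the conjugate-point analysis. In short: your proposal is incomplete precisely where you say it is, but the missing step is not supplied by the paper's own argument either.
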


\begin{proof}
The following two identities will be handy:
\begin{equation}\begin{split}\label{derivatives}
		& \text{(a)}\quad \dot \y^TB^T\y=\frac{d}{dt}(\y^TB^T\y)-\y^T\dot B^T\y-\y^TB^T\dot \y\ , \\
		& \text{(b)}\quad \y^TB \dot \y=\frac{d}{dt}(\y^TB\y)-\y^T\dot B\y-\dot \y^TB \y \ .
\end{split}\end{equation}
Now, using \eqref{derivatives}-(a) in \eqref{2ndVar0} and recalling that $\y(0)=\y(1)=0$, we obtain $$\delta^2 \LL = \frac12 \int_0^1\left(\dot \y^T C\dot \y + \y^T(A-\dot B^T)\y+ \y^T(B-B^T)\dot \y\right)dt\ , $$ and since $\y^T(B-B^T)\dot \y=\dot \y^T(B^T-B) \y$, this rewrites as
\begin{equation}\label{2ndVar2a}
\delta^2 \LL =\frac12 \int_0^1 \bmat{\y^T & \dot \y^T} \bmat{A-\dot B^T & (B-B^T)/2\\  (B^T-B)/2 & C} \bmat{\y \\ \dot \y} dt\ .
\end{equation}
Similarly, using \eqref{derivatives}-(b) in \eqref{2ndVar0} and recalling that $\y(0)=\y(1)=0$, we obtain 
$$\delta^2 \LL = \frac12 \int_0^1\left(\dot \y^T C\dot \y + \y^T(A-\dot B)\y+ \dot \y^T(B^T-B) \y\right)dt\ ,$$
and since $\dot \y^T(B^T-B) \y=\y^T(B-B^T)\dot \y$, this rewrites as
\begin{equation}\label{2ndVar2b}
	\delta^2 \LL = \frac12 \int_0^1 \bmat{\y^T & \dot \y^T} \bmat{A-\dot B & (B^T-B)/2\\  (B-B^T)/2 & C} \bmat{\y \\ \dot \y} dt\ .
\end{equation}
Finally, adding together \eqref{2ndVar2a} and \eqref{2ndVar2b}, and dividing by $2$, we get the expression
\begin{equation}\begin{split}
		\delta^2 \LL & = \frac12 \int_0^1 \bmat{\y^T & \dot \y^T} \bmat{A-\frac{\dot B+\dot B^T}{2} & \frac{B-B^T}{2} \\  \frac{B^T-B}{2} & C} \bmat{\y \\ \dot \y} dt\ ,
\end{split}\end{equation}
as claimed.
\end{proof}
\begin{rem}
The
expression \eqref{2ndVar2} is what we obtain in case $B\ne B^T$, and there is the symmetric part of $\dot B$ showing up in $Q$, as well as the antisymmetric part of $B$ in $E$.  Recall that $\dot B=\frac{d}{dt}\LL_{\x\dot \x}$.  Of course, in case $B=B^T$, then $Q=A-\dot B$ and $E=0$.
\end{rem}

With Lemma \ref{SecondVar}, we can now proceed to give verifiable sufficient conditions for having a minimizer.  We need $\delta^2 \LL$ to be positive when considering all smooth\footnote{piecewise smooth would actually suffice} functions $\y$ such that $\y(0)=\y(1)=0$. The following fundamental result gives the sufficient conditions, see \cite{Gelfand}, and is given without proof.

\begin{thm}\label{GelfandSuff}
The second variation $\delta^2 \LL$ in \eqref{2ndVar2} is positive if 
\begin{itemize}
	\item[(i)] $P\succ 0$ (Legendre condition), where $P=\LL_{\dot \x\dot \x}$, and 
	\item[(ii)] there are no conjugate points to $0$ in $(0,1]$.  
\end{itemize}
\qed
\end{thm}

We can verify directly condition (i) of Theorem \ref{GelfandSuff}, in the cases of interest to us.
\begin{itemize}
	\item[(a)] In the case of cost given by length, \eqref{length-cost}, because of Lemma \ref{LinAlg}, we have $\LL_{\dot \x\dot \x}=
	\frac{K(\x)}{\|\dot \x\|^3} \bigl[ \|\dot \x\|^2I - \dot \x (\dot \x)^T\bigr]$ which is only positive semidefinite, hence condition (i) of Theorem \ref{GelfandSuff} fails.
	\item[(b)]  In the case of cost given by energy, \eqref{energy-cost}, we have
	$\LL_{\dot \x\dot \x}=K(\x)^2 I$ so (i) of Theorem \ref{GelfandSuff} holds.
\end{itemize}

As far as the absence of conjugate points, when $P\succ 0$, the absence of conjugate points can be characterized by passing to the Hamiltonian formulation of \eqref{2ndVar2} and from there to the Riccati equation in \eqref{Riccati}.    The underlying process is well understood, and it is the one laid out by Gelfand-Fomin in \cite{Gelfand} and Zelikin in \cite{Zelikin}.  We derive it in our case.

\begin{thm}\label{SuffRiccati}
Let $P$ be positive definite.  Then, the absence of conjugate points is equivalent to requiring that the $(n,n)$ matrix $U$ in \eqref{LinHam} is invertible on $(0,1]$.  Here, we let
$Y=\bmat{U \\ V} \in \R^{2n,n}$, where $U,V\in \Rnxn$ are given by the solution of the following linear system
\begin{equation}\label{LinHam}
	\dot Y=MY=\bmat{-P^{-1}E^T & P^{-1} \\ Q-EP^{-1}E^T & EP^{-1}}\bmat{U\\ V}\ ,\,\ \bmat{U\\ V}(0)=\bmat{0\\ P(0)}\ ,
\end{equation}
and all blocks are $(n,n)$. \\
Alternatively, the absence of conjugate points is guaranteed if the  
Riccati equation in \eqref{Riccati} has a well defined symmetric and bounded solution over the entire interval $(0,1]$, where
\begin{equation}\label{Riccati}	
    \dot W= Q-EP^{-1}E^T +WP^{-1}E^T+EP^{-1}W-WP^{-1}W \ .
\end{equation}
\end{thm}

\begin{proof}
The first step is to take the Euler-Lagrange equation of the functional in the 2nd variation in \eqref{2ndVar2}.  We have
$$\delta^2 \LL  =\frac12 \int_0^1 \left( \dot \y^T P \dot \y+ \dot \y^TE^T\y+\y^TE\dot \y+\y^T Q \y \right) dt \ ,$$
and so the Euler-Lagrange equation reads
$$(Q\y+E\dot \y)-\frac{d}{dt}(P\dot \y+E^T\y)\ = \ 0\ .$$
Then, we change variables using  the (generalized) momenta $\p=P\dot \y+E^T\y$, 
so that $\dot \y=P^{-1}\p -P^{-1}E^T\y$ and from these we obtain the sought linear system (see \eqref{LinHam}):
$$\begin{cases} \frac {d}{dt}\y & = - P^{-1}E^T \y +P^{-1}\p \\
	\frac {d}{dt} \p &= (Q-EP^{-1}E^T)\y + EP^{-1}\p \ .
\end{cases}$$
With this rewriting, the fact that the absence of conjugate points is equivalent to invertibility of $U$ is a classic result (see \cite[Def.1, Ch.5 and note that $V(0)=P(0)\dot Y(0)$ there]{Gelfand}).\\
The connection of this fact to the existence of a solution to \eqref{Riccati} derives from the realization that, for as long as $U$ is invertible, one has that
the solution of \eqref{Riccati} is given by
$$W(t)=V(t)U^{-1}(t)\ .$$
This latter result is easily verified directly.  In fact, 
\begin{gather*}
    \frac{d}{dt} \bigl(V(t)U^{-1}(t)\bigr)=\dot V U^{-1}-VU^{-1}\dot U U^{-1}= \\
    Q-EP^{-1}E^T +(VU^{-1})P^{-1}E^T+EP^{-1}(VU^{-1})-(VU^{-1})P^{-1}(VU^{-1})  \ , 
\end{gather*}
so that $W$ and $VU^{-1}$ satisfy the same differential equation.
\end{proof}

\begin{rem}
The matrix $M$ in \eqref{LinHam} is a special case of a {\sl Hamiltonian matrix}.  Recall that a matrix $M=\bmat{M_{11} & M_{12} \\ M_{21} & M_{22}}$ is Hamiltonian if $M_{12}$ and $M_{21}$ are symmetric and $M_{22}=-M_{11}^T$.
\end{rem}

\begin{summ}\label{Summ1}
For the two costs given by \eqref{length-cost}, respectively \eqref{energy-cost}, we can solve the Euler-Lagrange equations \eqref{EL-length}, respectively \eqref{EL-energy}.  Based on Lemma \ref{const-speed} and Theorem \ref{GelfandSuff}, in the case of \eqref{energy-cost}, we have reliable criteria to establish both that we have actually satisfied the Euler-Lagrange equation and that the computed solution is a minimizer; we will exploit these facts algorithmically, see Section \ref{algos} and \ref{results}.  However, in the case of cost given by \eqref{length-cost}, we do not yet have criteria guaranteeing the same.
\end{summ}

\subsection{Equivalence between length and energy minimization}\label{Equivalence}
Powerful results from Riemannian geometry allow bypassing the difficulties pointed out in Summary \ref{Summ1} relative to the cost \eqref{length-cost}.  In fact, the following fundamental result holds; e.g., see \cite[Proposition 2.97]{RiemannGeo} or \cite[Lemma 2.1.1]{Jost}.

\begin{fact}\label{equivalence}
If the energy cost \eqref{energy-cost} is minimized along the curve $\gamma$, then  also the length cost \eqref{length-cost} is minimized along the same curve $\gamma$. Furthermore, we have the following relation between energy and length: $L^2(\gamma) = 2E(\gamma)$.
\end{fact}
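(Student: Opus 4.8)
The plan is to derive everything from the Cauchy--Schwarz inequality, combined with the reparametrization invariance of the length functional and the constant-speed property of energy minimizers recorded in Corollary \ref{const-speed}. Write $L(\x)=\int_0^1 K(\x(t))\|\dot\x(t)\|\,dt$ and $E(\x)=\frac12\int_0^1 K^2(\x(t))\|\dot\x(t)\|^2\,dt$ for $\x\in\Paths(\ba,\bb)$. The first step is the basic inequality: applying Cauchy--Schwarz on $[0,1]$ to the pair of functions $1$ and $t\mapsto K(\x(t))\|\dot\x(t)\|$ gives
\[
L(\x)^2=\left(\int_0^1 K(\x)\|\dot\x\|\,dt\right)^2\le \left(\int_0^1 1\,dt\right)\left(\int_0^1 K^2(\x)\|\dot\x\|^2\,dt\right)=2E(\x),
\]
so $L(\x)^2\le 2E(\x)$ for every admissible path, with equality precisely when the speed $t\mapsto K(\x(t))\|\dot\x(t)\|$ is constant on $[0,1]$.

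The second step uses that $L$ is invariant under orientation-preserving reparametrizations whereas $E$ is not: given a regular path $\x$, its constant-speed reparametrization $\tilde\x\in\Paths(\ba,\bb)$ satisfies $L(\tilde\x)=L(\x)$ and, being constant-speed, $2E(\tilde\x)=L(\tilde\x)^2=L(\x)^2$ by the equality case above; in particular $2E(\tilde\x)\le 2E(\x)$. Now chain these facts. Let $\gamma$ minimize $E$ over $\Paths(\ba,\bb)$. By Corollary \ref{const-speed}, $\gamma$ has constant speed, so the equality case of the first step gives $L(\gamma)^2=2E(\gamma)$, which is the asserted identity $L^2(\gamma)=2E(\gamma)$. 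For an arbitrary $\x\in\Paths(\ba,\bb)$ with constant-speed reparametrization $\tilde\x$,
\[
L(\gamma)^2=2E(\gamma)\le 2E(\tilde\x)=L(\tilde\x)^2=L(\x)^2,
\]
where the inequality is the minimality of $\gamma$ for $E$ and the remaining equalities are the two steps above. Hence $L(\gamma)\le L(\x)$ for all $\x$, i.e.\ $\gamma$ also minimizes $L$.

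The hard part is not the algebra — the first step is a one-line inequality — but making the reparametrization argument rigorous inside the class of differentiable trajectories: one needs regularity ($\dot\x\ne 0$) so that the arc-length change of variables is admissible, and a short argument (a small perturbation, or a direct estimate showing that non-regular competitors cannot beat a constant-speed one) to dispose of paths that fail to be regular. I would also remark that this is precisely the classical Riemannian statement applied to the conformal metric $g=K^2\langle\cdot,\cdot\rangle$, which is what justifies calling the minimizing path a ``geodesic.''
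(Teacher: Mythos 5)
Your proof is correct. Note, however, that the paper does not prove this statement at all: it is recorded as a \emph{Fact} and delegated to the Riemannian geometry literature (Proposition 2.98 of the cited Gallot--Hulin--Lafontaine and Corollary 2.1.1 of Jost), so there is no in-paper argument to compare against. What you have written is essentially the classical proof from those references, transplanted to the conformal weight $K$: the Cauchy--Schwarz bound $L(\x)^2\le 2E(\x)$ with equality exactly for constant-speed paths, the reparametrization invariance of $L$ versus the non-invariance of $E$, and the chain $L(\gamma)^2=2E(\gamma)\le 2E(\tilde\x)=L(\x)^2$. One small refinement: you do not need to invoke Corollary \ref{const-speed} (and hence the Euler--Lagrange equation, which presupposes extra smoothness of the minimizer) to get constant speed of $\gamma$; minimality alone gives $2E(\gamma)\le 2E(\tilde\gamma)=L(\tilde\gamma)^2=L(\gamma)^2\le 2E(\gamma)$, forcing equality throughout, which simultaneously yields $L^2(\gamma)=2E(\gamma)$ and the constant-speed property. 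Your caveat about regularity ($\dot\x\ne 0$) being needed for the arc-length reparametrization to stay in the class of differentiable trajectories is the right thing to flag, and is the only genuinely delicate point; it is handled in the cited texts by passing to piecewise-$C^1$ competitors, which is consistent with the paper's footnote that piecewise smooth variations suffice.
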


The practical impact of Fact \ref{equivalence} is that one does not need to solve the length minimization problem, but solving \eqref{EL-energy}, 
and verifying that the sufficiency conditions hold,  
automatically gives the curve of shortest length.  In this respect, both approaches give the same geodesic curve. At the same time, it has to be emphasized that the parameterizations of the curve are different; that is, the function $x(t)$, $0\le t\le 1$, that solves the Euler-Lagrange \eqref{EL-energy} does not generally also solve \eqref{EL-length}.

\section{Algorithms used}\label{algos}

\subsection{Solving the BVP}\label{SolveBVP}
Solving the BVP \eqref{EL-Lagr} is a crucial component of our algorithm.  We used a collocation approach coupled with homotopy continuation.  We ended up using collocation at Lobatto points as implemented in the software {\tt bvpc5} of {\tt Matlab}.  A key component of collocation methods is the solution of (large) nonlinear systems by use of Newton's method, and this turned out to be a very delicate task and required us to adopt a homotopy approach when solving \eqref{EL-Lagr}.  
Namely, we embedded the problem into a parameter dependent one for which the solution at the $0$-value of the parameter is a straight line.  To clarify, 
when solving both \eqref{EL-length} and \eqref{EL-energy}, we used the homotopy
\begin{equation}\label{HomoLength}
\widehat{K}(x(t,\alpha),\alpha)=(1-\alpha)+\alpha K(x(t,\alpha))\ ,\,\, 0\le \alpha \le 1\ ,
\end{equation}
progressively increasing $\alpha$ from $0$ to $1$ (where we obtain the sought solution of the BVP).  See Figures \ref{fig:Path_E1}, \ref{fig:Path_E2}, and \ref{fig:Path_E3} in Section \ref{results}.

\subsection{A posteriori verification of minimum: Riccati equation}\label{A-poste} To verify that there are no conjugate points in $(0,1]$, we propose the following simple algorithm. Since the blocks of the matrix $M$ are available at the grid points found while solving the BVP, we adopt a scheme that only requires this information.  Below, we refer to $\hat \x(t_k)$ as the computed trajectory at the gridpoints.

\begin{itemize}
\item[(i)] Let $\hat \x(t_k)$ be the computed trajectory at gridpoints $t_k$, $k=0,1,\dots, N$, where $t_0=0$, $t_N=1$.  Form
$A,B,C$ and $\dot B$, at $t_k$, $k=0,1,\dots, N-1$.
To compute $\dot B$, we use forward differences: $\dot B(t_k)=\frac{B(t_{k+1})-B(t_k)}{t_{k+1}-t_k}$, so that we have $\dot B$ at all grid points except the last one.  Then, we form the blocks $P$, $Q=A-\frac{\dot B+\dot B^T}{2}$ and $E=\frac{B- B^T}{2}$ at all grid points, except $t_N$.
\item[(ii)]
Approximate the solution of \eqref{Riccati} by using Euler method on the linear system identified in Theorem \ref{SuffRiccati}. That is, with $U_0=0$ and $V_0=P(0)$, compute 
\begin{equation*}\begin{split}
&  \bmat{U_{k+1} \\ V_{k+1}}\ = \ \bigl(I+h_kM(t_k)\bigr) \bmat{U_{k} \\ V_{k}}\ ,\,\,\text{where}  \\ 
& h_k=t_{k+1}-t_k ,\,\ k=0,1,\dots, N-1, \,\ t_0=0\ ,\, t_N=1.
\end{split}\end{equation*}
Of course, no inverse $P^{-1}$ is explicitly computed when forming $M$, and a linear system is instead solved.   To clarify, we have
\begin{equation*}\begin{split}
&  U_{k+1} \ = \ U_k + h_k \left[-(P^{-1}E^T)(t_k)U_{k} +P^{-1}(t_k)V_k\right] \,\ \text{and} \\
&  V_{k+1} \ = \ V_k + h_k \left[(Q-EP^{-1}E^T)(t_k)U_{k} +EP^{-1}(t_k)V_k\right] \ ,
\end{split}\end{equation*}
and so we solve the two systems $PX=E^T$ and $PY=V_k$.
\item[(ii)] 
Observe that we do not need to form the approximation to \eqref{Riccati}
$W_{k+1}=V_{k+1}U_{k+1}^{-1}\ $, since all we care about is whether or not there are conjugate points, but not the solution of \eqref{Riccati}. 
So, we monitor if $\det(U_{k+1})$ becomes negative, in order to rule out having missed singularities.  At the beginning, $\det(U_0)=0$, and -since we are using Euler method- we have $U_1=h_0I$, which is positive definite.
If there is a change of sign in the determinant of $U_{k+1}$ (step (ii) above), then we declare that there are conjugate points in $(0,1]$, otherwise we do not.  If there are no conjugate points, then the extremal we found is a minimizer.
\end{itemize}

\subsection{Solving the discrete OT}\label{DiscOT}
The formulation \eqref{DiscreteMK} lends itself to well developed algorithms.  We mention two of them which we have used in our numerical experiments below: (a) the linear assignment problem (matching problem), and (b) the Sinkhorn method.

\begin{itemize}
\item[(a)] In the linear assignment problem, given the cost matrix $C$, we match a row to  a column so to minimize the cost.  To illustrate, in \eqref{measures},
take $k_0=k_1=k$ and $\mu_i = \frac{1}{k}$, $\nu_j = \frac{1}{k}$, $\forall i,j$.  In this case, 
$\pi$ will need to be a non-negative doubly stochastic matrix up to a scalar multiplication.  Popular solution techniques for the assignment problem are the Hungarian algorithm (see \cite{kuhn1955hungarian, munkres1957algorithms, edmonds1972theoretical, tomizawa1971some}),  relaxation algorithms like the  auction algorithm of 
Bertsekas and variations of it (\cite{bertsekas1981new, walsh2019real}), as well as Dijkstra algorithm, \cite{Dijkstra}, which is what we used in our numerical experiments in Section \ref{results} as implemented in the builtin function {\tt matchpairs} of {\tt Matlab}, see \cite{DuffKoster}.  We note that --with this technique-- we cannot split masses.
\item[(b)] Sinkhorn.  This relatively recent technique, \cite{Cuturi2013a}, has won the favor of many people, because of the flexibility to handle the general setup of \eqref{measures}.
First, one regularizes the minimization problem by adding to it an {\emph {entropic regularization}} term to smooth out the problem (actually, one adds the negative of entropy regularization).  That is, for  a fixed $\epsilon>0$, one seeks the minimum of $J_{\epsilon}$ below:
\begin{equation}\label{DiscreteMKeps}
	\begin{cases}
		& J_{\epsilon} := \sum_{i=1}^{k_0}\sum_{j=1}^{k_1} c(\x_i,\y_j) \pi_{ij} + \epsilon \sum_{i=1}^{k_0}\sum_{j=1}^{k_1}\pi_{ij}\log\pi_{ij}\ ,\\
		& \text{subject to } \sum_{j=1}^{k_1} \pi_{ij} = \mu_i,\  i=1,\dots,k_0\ , \text{ and } \sum_{i=1}^{k_0} \pi_{ij} = \nu_j,\ j=1,\dots,k_1\ .
	\end{cases}
\end{equation} 
An appealing feature of the method is that it is simple to implement an algorithm to find the optimal plan for the regularized problem \eqref{DiscreteMKeps}. Defining $\bu \in \mathbb{R}^{k_0}$, $\bv \in \mathbb{R}^{k_1}$, and $K\in \mathbb{R}^{k_0\times k_1}$ with $K_{ij}=e^{-\frac{c(\x_i,\y_j)}{\epsilon}}$,
to recover the optimal plan $\pi^*_{\epsilon}$ of \eqref{DiscreteMKeps}, for fixed $\epsilon>0$, one needs to perform a sufficient number of iterations on the vectors $\bu$ and $\bv$ to converge to $\pi^*_{\epsilon}$ (see \cite{Cuturi2013a}):
\begin{equation}\label{SinkhornIter}
	\bv^{(k+1)}_j = \frac{\nu_j}{\{K^T\bu^{(k)}\}}_j ,\,
	\bu^{(k+1)}_i = \frac{\mu_i}{\{K\bv^{(k+1)}\}}_i ,\, \pi_{\epsilon}^{(k+1)} = {\tt{diag}}(\bu^{(k+1)})K{\tt{diag}(\bv^{(k+1)})}\ .
\end{equation}
Observe that, with this technique, we may likely end up splitting masses.
\end{itemize}

\begin{rem}\label{sinkhorn-eps}
In principle, as $\epsilon$ decreases, the minimum value of $J_{\epsilon}$ in \eqref{DiscreteMKeps} approaches the minimum value of $J$ in \eqref{DiscreteMK} at a linear rate. However, in practice, there is always some value $\epsilon_0$, which depends on the problem, such that for $\epsilon<\epsilon_0$, there are numerical stability issues;
because of the $\frac{1}{\epsilon}$ term; see \cite{PeyreCuturi} and \cite{Schmitzer} for a discussion of this aspect, and see Section \ref{results} for the values of $\epsilon$ we used in our examples.
\end{rem}

\section{Numerical results}\label{results}
In this last Section we show results of numerical experiments in 2 and 3 space dimensions.  First, we show results on computation of an optimal trajectory for both lengths and energy costs, exemplifying how we verify optimality as well as equivalence of the two curves we obtain.  Then, we compute the cost matrix for a discrete Optimal Transport problem and solve the latter both using the linear assignment (matching) formulation, and the Sinkhorn method.

\subsection{Optimal Trajectory}\label{Opt_Path}
Below, we show computation of optimal trajectories for the energy and length minimization problems in the context of two-point configurations.   
Our scope here is to verify that we satisfy both necessary and sufficient conditions for the energy-minimizing solution and also to show that the curves obtained by energy-minimization or length-minimization are indeed the same curve. In the  numerical examples below, we used $21$ equispaced homotopy steps (see Section \ref{SolveBVP}) and in all of our experiments we used absolute and relative error tolerances of $10^{-4}$ for solving the BVP. Table \ref{tab:Cost_time} reports the optimal transport cost for each problem discussed below, together with the total computation time, on a fixed mesh of $10^4$ time-steps. From Table \ref{tab:homopoty_time}, we note that the computation time of each homotopy step is roughly the same, which confirms that the numerical method is effectively working with a fixed mesh.

\renewcommand\theequation{E\arabic{equation}} \setcounter{equation}{0}

The following test cases in 2-d and 3-d are considered:
\begin{gather}
	\ba = \icol{-2\\1}\ ,\ \bb = \icol{2\\0}\ ,\ K(\x) = \frac{1}{\frac12+||\x||}\ ,\label{E1}\\
	\ba = \icol{-7\\-5}\ ,\ \bb = \icol{6\\7}\ ,\ K(\x) = \sin(x_1(t)) - \sin(x_2(t)) + 3\ ,\label{E2}\\
	\ba = \frac45\icol{1\\1\\-1}\ ,\ \bb = \frac45\icol{1\\1\\1}\ ,\ K(\x) = ||\x|| + \frac{1}{10}\ .\label{E3}
\end{gather}

\begin{table}[ht]
	\centering\scalebox{1}{\renewcommand*{\arraystretch}{1.5}
		\begin{tabular}{||c||c|c||c|c||}\hline\hline
			& \multicolumn{2}{|c||}{\textbf{Energy Cost}} & 
			\multicolumn{2}{|c||}{\textbf{Length Cost}}\\\hline
			\textbf{Example}  & \textbf{Solution} \eqref{energy-cost} & \textbf{Time} & \textbf{Solution} \eqref{length-cost} & \textbf{Time}\\\hline
			\eqref{E1}  & $2.2917$ & $23.63 $ sec. & $2.1409$ & $20.422$ sec.\\\hline
			\eqref{E2}  & $1108.4$ & $23.566$ sec. & $47.082$ & $23.345$ sec.\\\hline
			\eqref{E3}  & $1.9684$ & $24.796$ sec. & $1.9841$ & $23.458$ sec.\\\hline
			\hline
	\end{tabular}}
	\caption{Minimal cost values and computation time}
	\label{tab:Cost_time}
\end{table}

\begin{table}[ht]
	\centering\scalebox{1}{\renewcommand*{\arraystretch}{1.5}
		\begin{tabular}{||c||c|c|c||c|c|c||}\hline\hline
			& \multicolumn{3}{|c||}{\textbf{Energy Cost}} & 
			\multicolumn{3}{|c||}{\textbf{Length Cost}}\\\hline
			\textbf{Homotopy Step}  & \eqref{E1} & \eqref{E2} & \eqref{E3} & \eqref{E1} & \eqref{E2} & \eqref{E3}\\\hline
			$\alpha = 0   $ & $0.3659$ & $0.45082$ & $0.55204$ & $0.42017$ & $0.50373$ & $0.41298$ \\\hline
            $\alpha = 0.1 $ & $1.1119$ & $ 1.4195$ & $ 1.1878$ & $0.97861$ & $ 1.0408$ & $  1.131$ \\\hline
            $\alpha = 0.2 $ & $1.1061$ & $ 1.2395$ & $ 1.2002$ & $ 1.0344$ & $ 1.2453$ & $ 1.0631$ \\\hline
            $\alpha = 0.3 $ & $1.1176$ & $ 1.2487$ & $ 1.2254$ & $ 1.0008$ & $ 1.2670$ & $ 1.1905$ \\\hline
            $\alpha = 0.4 $ & $1.1128$ & $ 1.2448$ & $  1.213$ & $0.95345$ & $ 1.2678$ & $ 1.1444$ \\\hline
            $\alpha = 0.5 $ & $1.1201$ & $ 1.2477$ & $  1.234$ & $0.95799$ & $ 1.2678$ & $ 1.1459$ \\\hline
            $\alpha = 0.6 $ & $1.2048$ & $ 1.0189$ & $ 1.2004$ & $0.95029$ & $ 1.0441$ & $ 1.1423$ \\\hline
            $\alpha = 0.7 $ & $1.2048$ & $ 1.0241$ & $ 1.2017$ & $   1.02$ & $ 1.0424$ & $ 1.1704$ \\\hline
            $\alpha = 0.8 $ & $1.1836$ & $ 1.0355$ & $ 1.1817$ & $ 1.0197$ & $ 1.0451$ & $ 1.1794$ \\\hline
            $\alpha = 0.9 $ & $1.1707$ & $ 1.0319$ & $ 1.1827$ & $ 1.0337$ & $ 1.0549$ & $ 1.1902$ \\\hline
            $\alpha = 1   $ & $1.3944$ & $ 1.0280$ & $ 1.2024$ & $ 1.0323$ & $ 1.0484$ & $ 1.2687$ \\\hline\hline
            \textbf{Total Time:} & $\mathbf{23.63}$ & $\mathbf{23.566}$ & $\mathbf{24.796}$ & $\mathbf{20.422}$ & $\mathbf{23.345}$ & $\mathbf{23.458}$ \\\hline
			\hline
	\end{tabular}}
	\caption{Computation time (in seconds) across 21 homotopy steps}
	\label{tab:homopoty_time}
\end{table}

\noindent{\bf Example \eqref{E1}}.
In Figure \ref{fig:Path_E1}, we display the energy- and length-minimizing solutions corresponding, along with curves obtained for selected homotopy steps; by virtue of our choice of homotopy \eqref{HomoLength}, of course the curves we obtain should be the same at $\alpha=1$, but not at the intermediate values of $\alpha$.  
On the same mesh where the trajectories are computed, we verify all of the claimed necessary and sufficient conditions.  Namely, 
in Figure \ref{fig:Cond_E1} we verify that the energy-minimizing path has constant-speed (necessary condition) and --by the technique in section \ref{A-poste}-- that there are no conjugate points (sufficiency condition), thereby confirming that we obtained a true minimizer. Further, the difference between the squared length and twice the energy is negligible, as expected, as is the comparison of the lengths of the two paths:
$$
\left|L(\z)^2 - 2E(\y)\right| = 1.22 \times 10^{-8}\ , \itext{and}
\left|L(\z) - L(\y)\right| = 2.85 \times 10^{-9}\ ,
$$
where $\z$ and $\y$ denote the length- and energy-minimizing paths, respectively, and $L(\z)$ and $E(\y)$ are their corresponding length and energy values.

\begin{figure}[ht]
	\centering
	\begin{subfigure}{.49\textwidth}
		\centering
		\includegraphics[width=\linewidth]{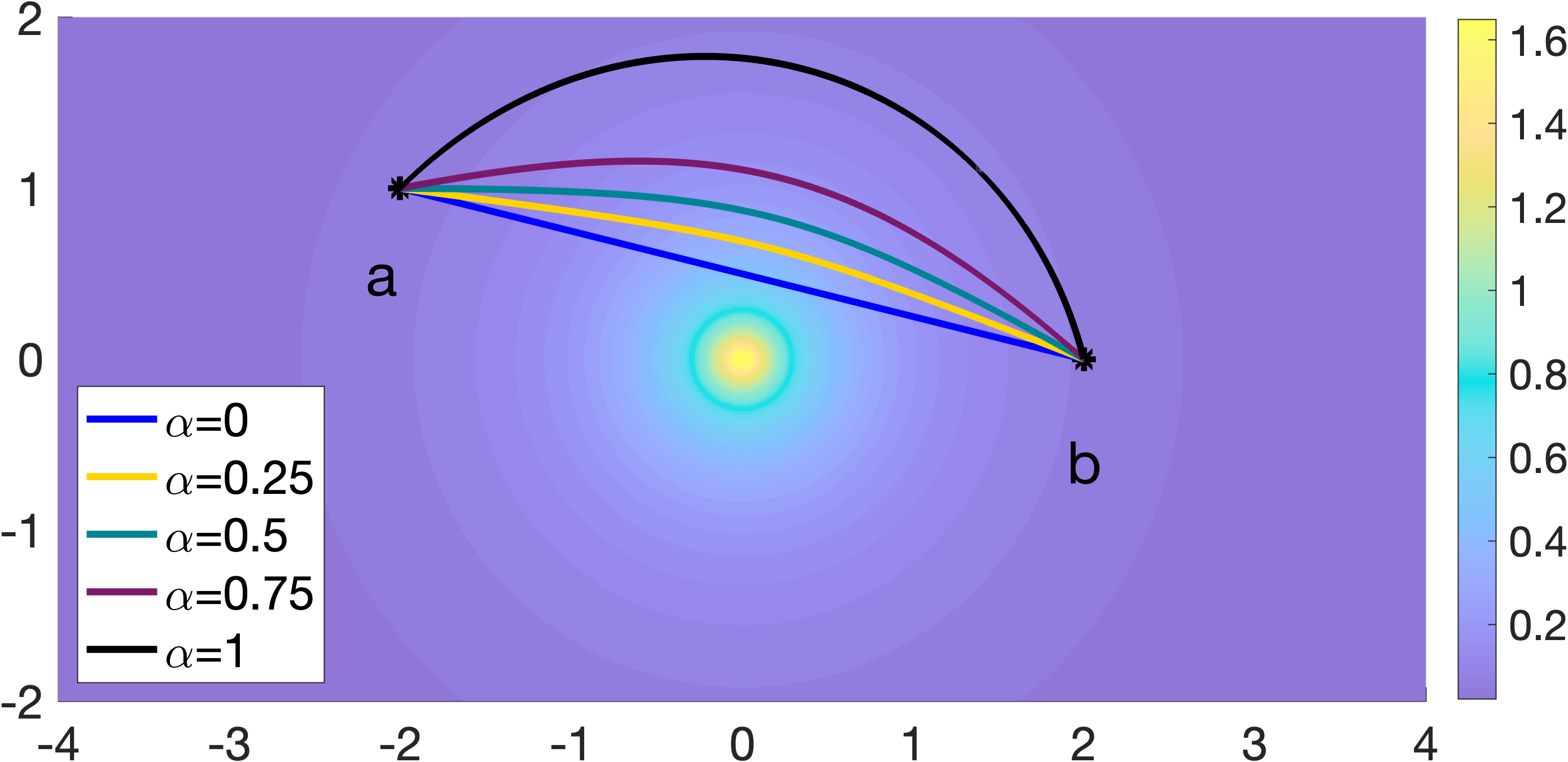}
		\caption{Energy solution}
	\end{subfigure}
	\begin{subfigure}{.49\textwidth}
		\centering
		\includegraphics[width=\linewidth]{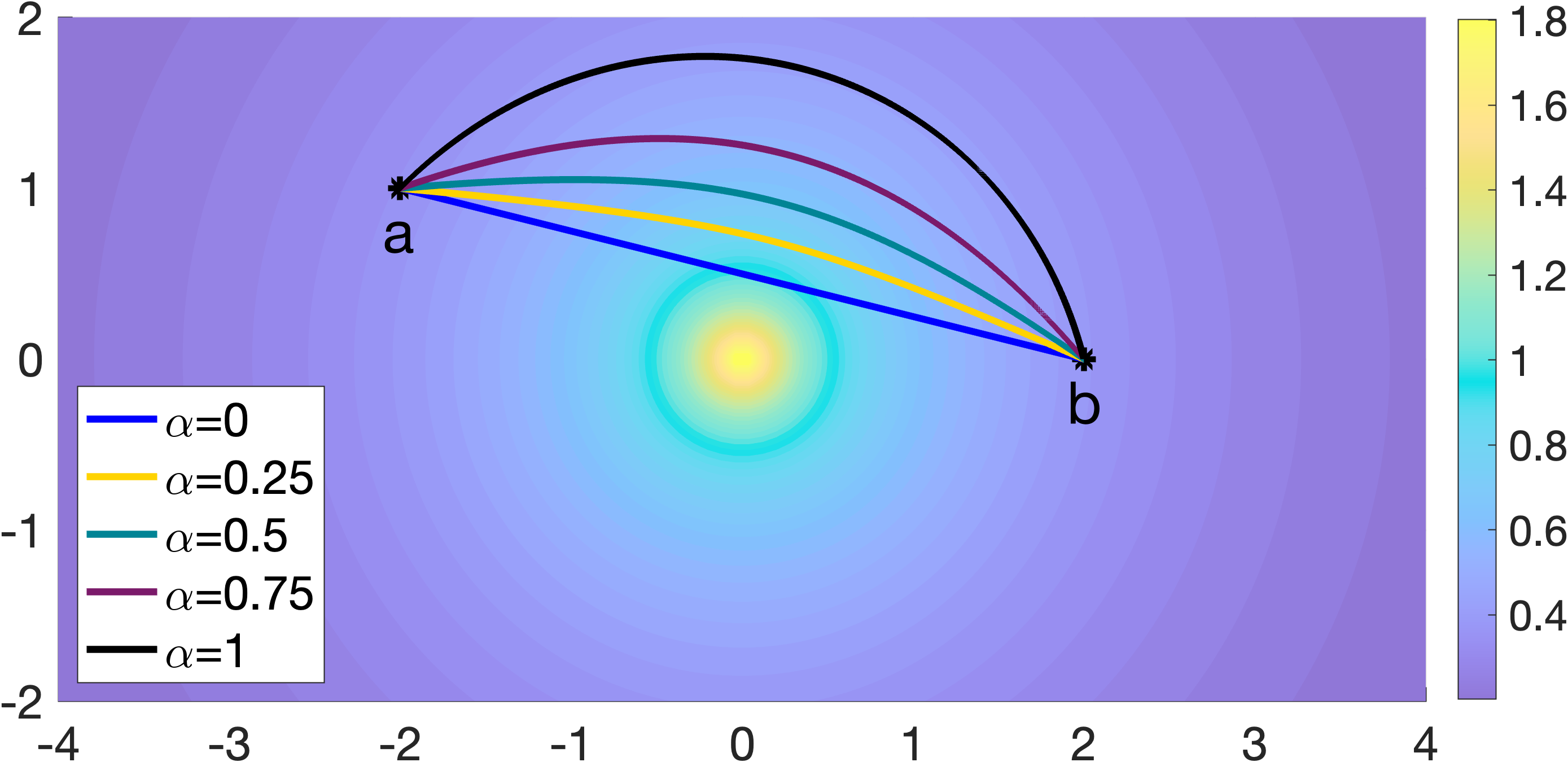}
		\caption{Length solution}
	\end{subfigure}
	\caption{Optimal paths for Example \eqref{E1}}\label{fig:Path_E1}
\end{figure}
\begin{figure}[ht]
	\centering
	\begin{subfigure}{.49\textwidth}
		\centering
		\includegraphics[width=\linewidth]{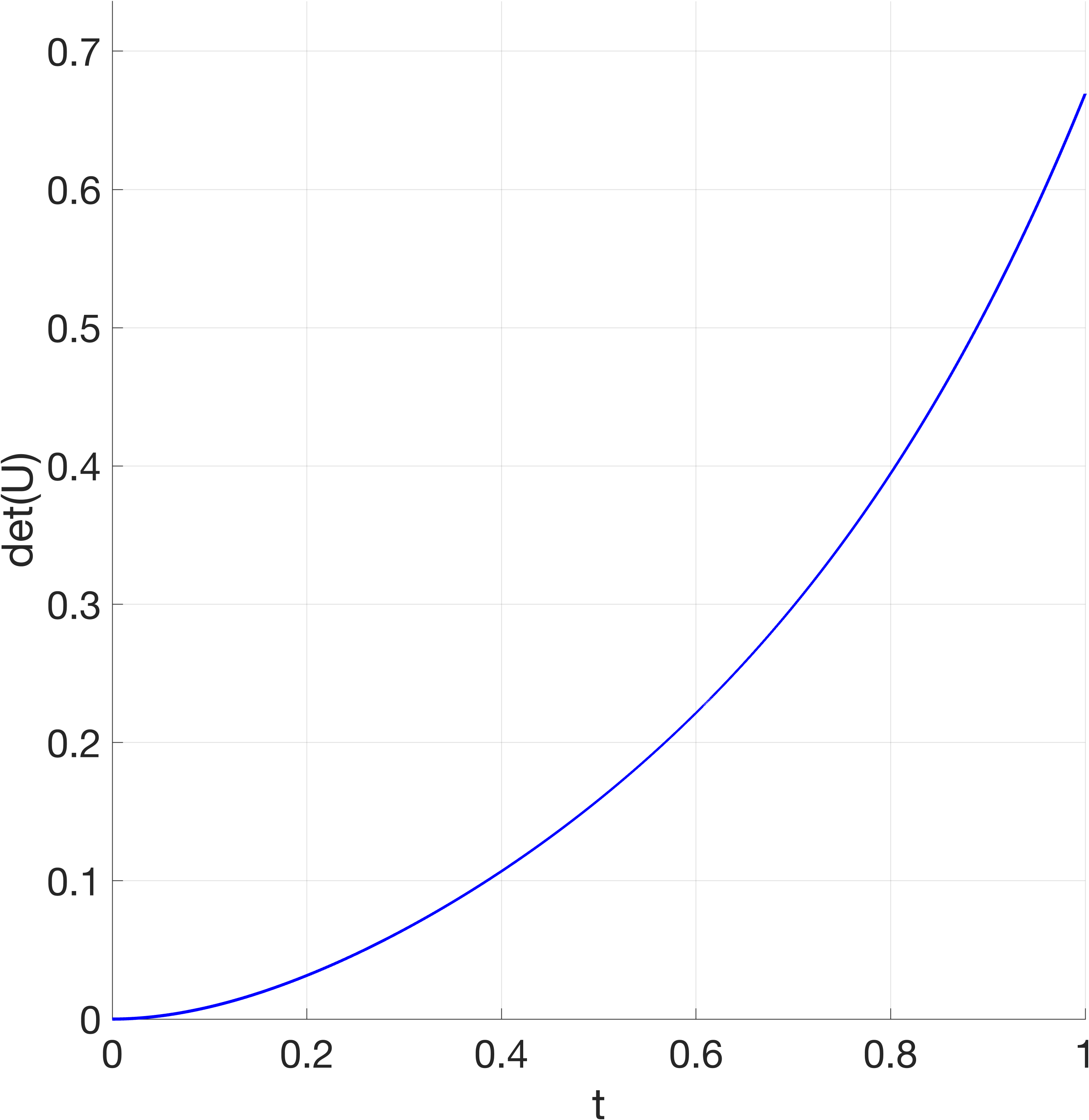}
		\caption{$\det(U)$ values, $\min_k\det(U_k) = 0$ at $t = 0$}
	\end{subfigure}
	\begin{subfigure}{.49\textwidth}
		\centering
		\includegraphics[width=\linewidth]{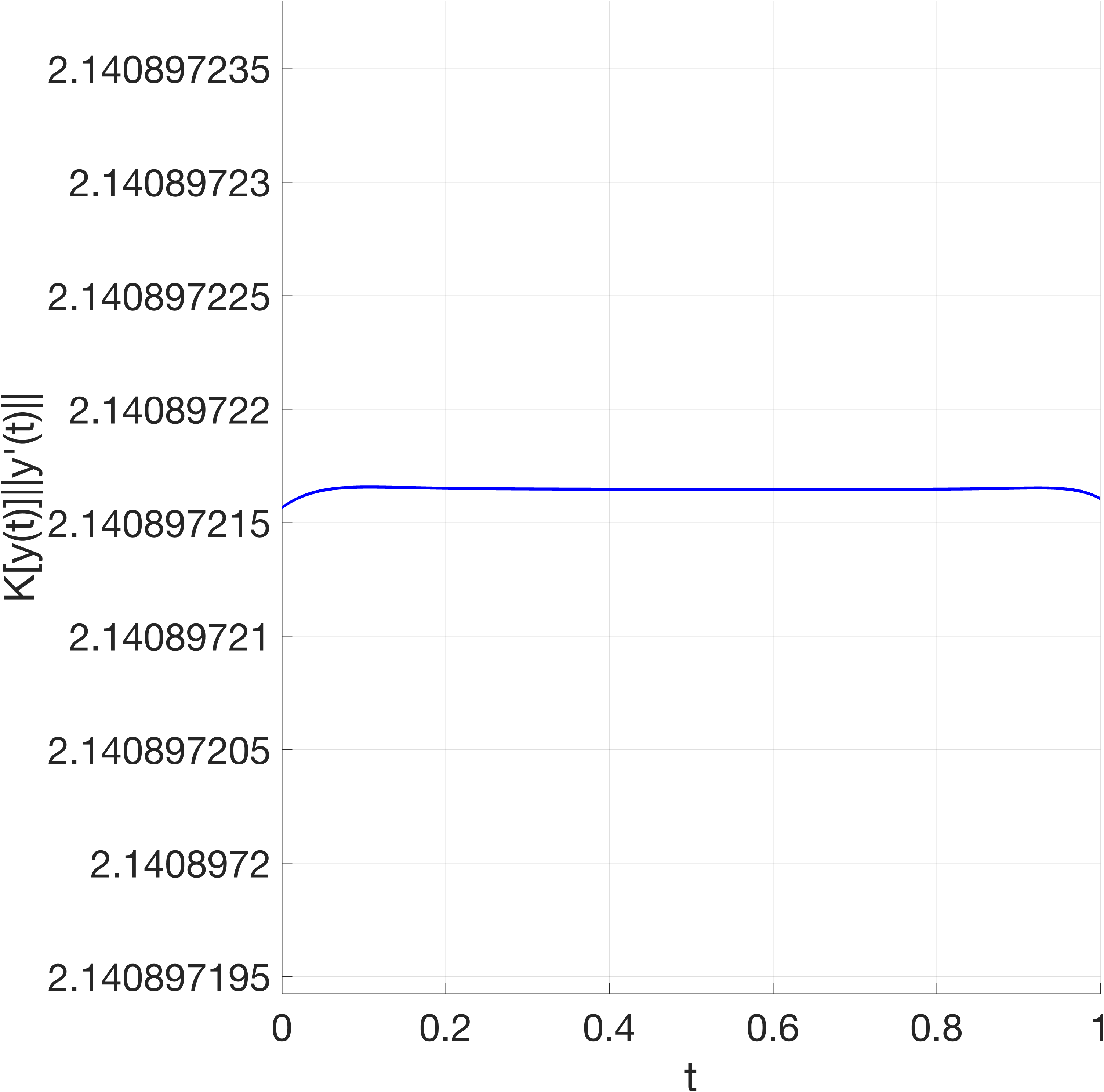}
		\caption{$K(\y) \| \dot \y\|$ values, range is $9.10*10^{-10}$}
	\end{subfigure}
	\caption{Necessary and sufficient conditions for Example \eqref{E1}}\label{fig:Cond_E1}
\end{figure}

\noindent{\bf Example \eqref{E2}}.
Figure \ref{fig:Path_E2} illustrates the solutions for Example \eqref{E2}, while Figure \ref{fig:Cond_E2} confirms that the energy-minimizing path satisfies both necessary and sufficient conditions of optimality. Likewise, the deviation between the squared length and twice the energy is small, as is a comparison of the path lengths:
$$
\left|L(\z)^2 - 2E(\y)\right| = 1.24 \times 10^{-5}\ , \itext{and}
\left|L(\z) - L(\y)\right| = 1.32 \times 10^{-7}\ .
$$

\begin{figure}[ht]
	\centering
	\begin{subfigure}{.49\textwidth}
		\centering
		\includegraphics[width=\linewidth]{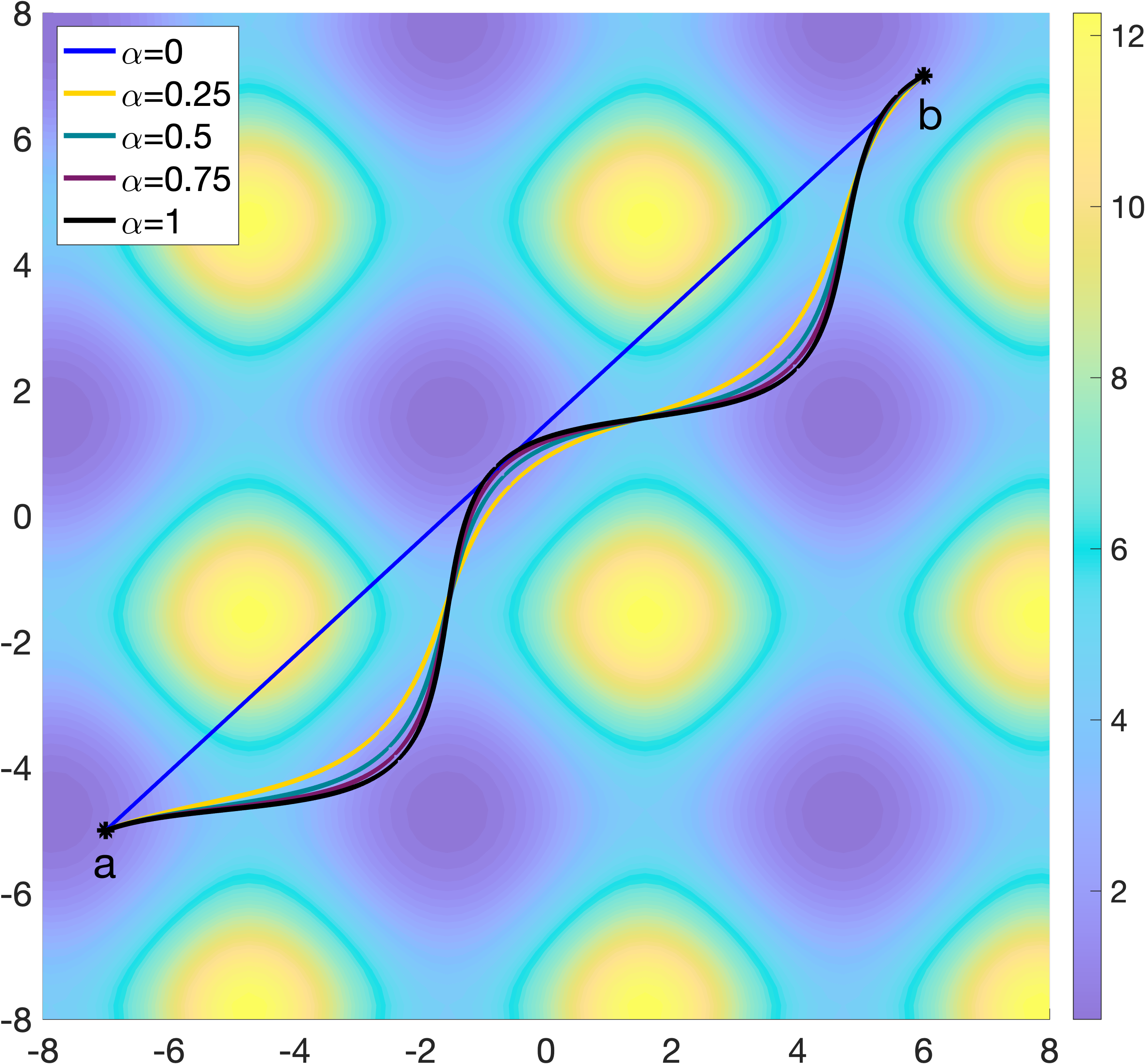}
		\caption{Energy solution}
	\end{subfigure}
	\begin{subfigure}{.49\textwidth}
		\centering
		\includegraphics[width=\linewidth]{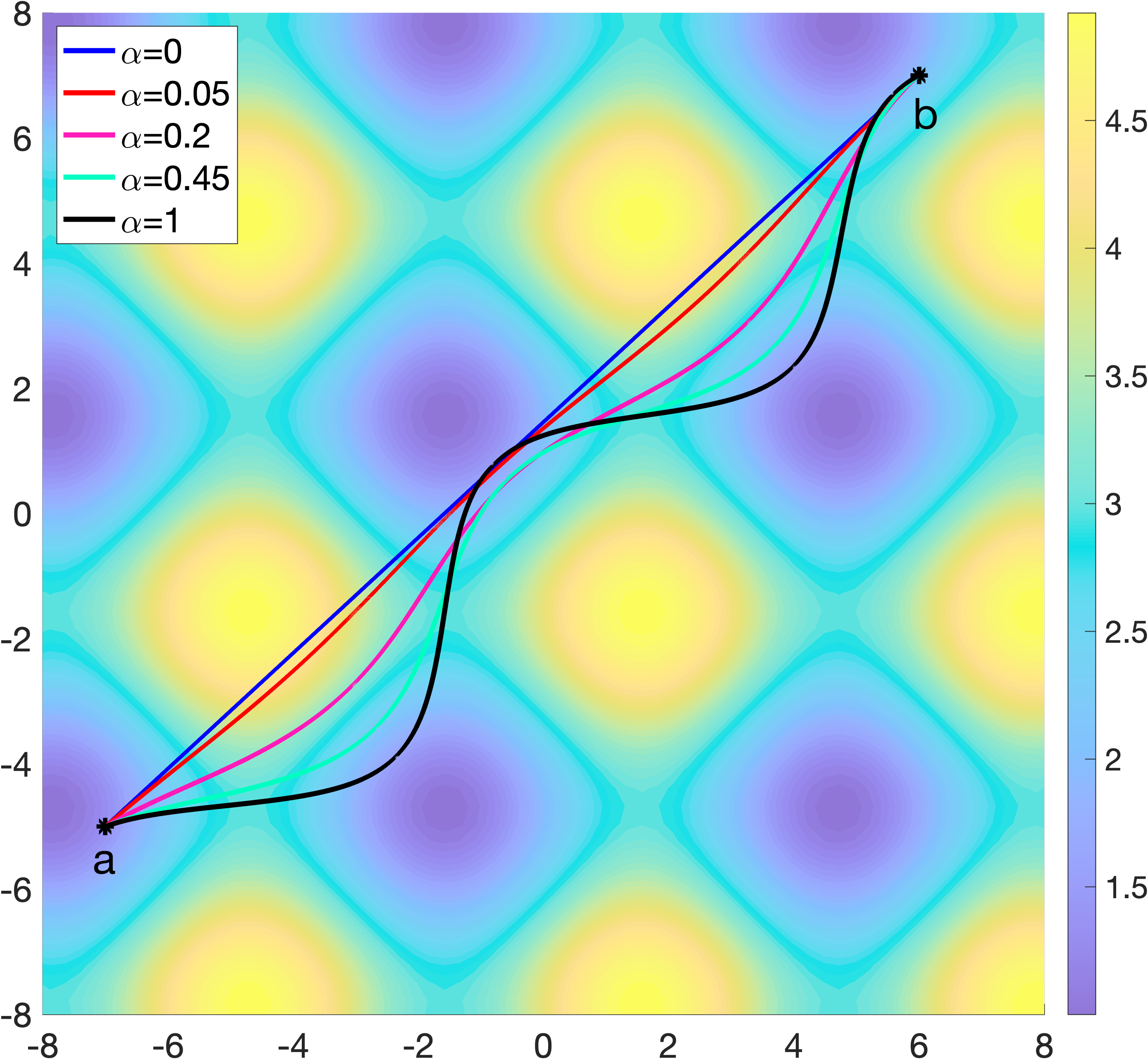}
		\caption{Length solution}
	\end{subfigure}
	\caption{Optimal paths for Example \eqref{E2}}\label{fig:Path_E2}
\end{figure}
\begin{figure}[ht]
	\centering
	\begin{subfigure}{.49\textwidth}
		\centering
		\includegraphics[width=\linewidth]{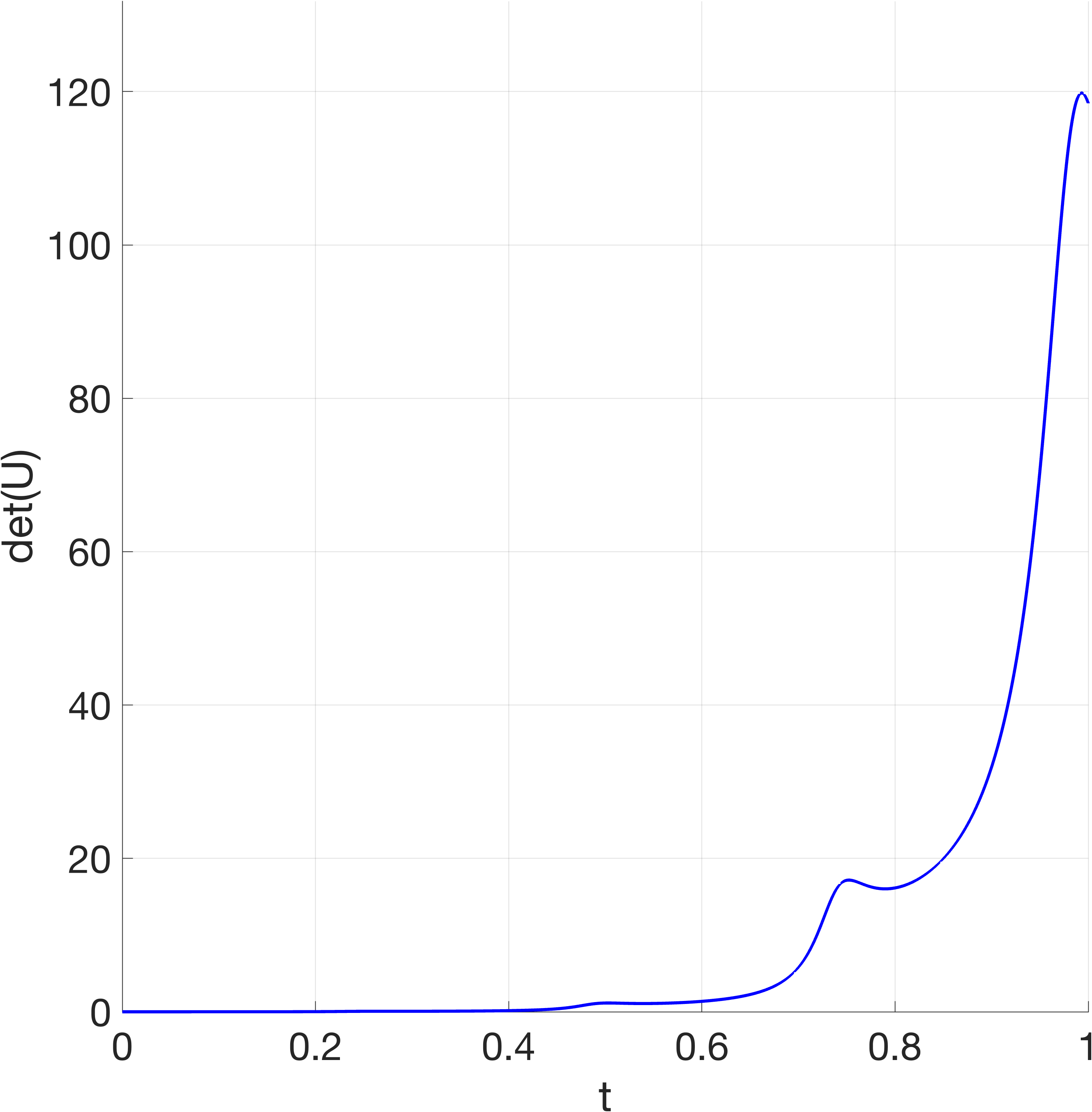}
		\caption{$\det(U)$ values, $\min_k\det(U_k) = 0$ at $t = 0$}
	\end{subfigure}
	\begin{subfigure}{.49\textwidth}
		\centering
		\includegraphics[width=\linewidth]{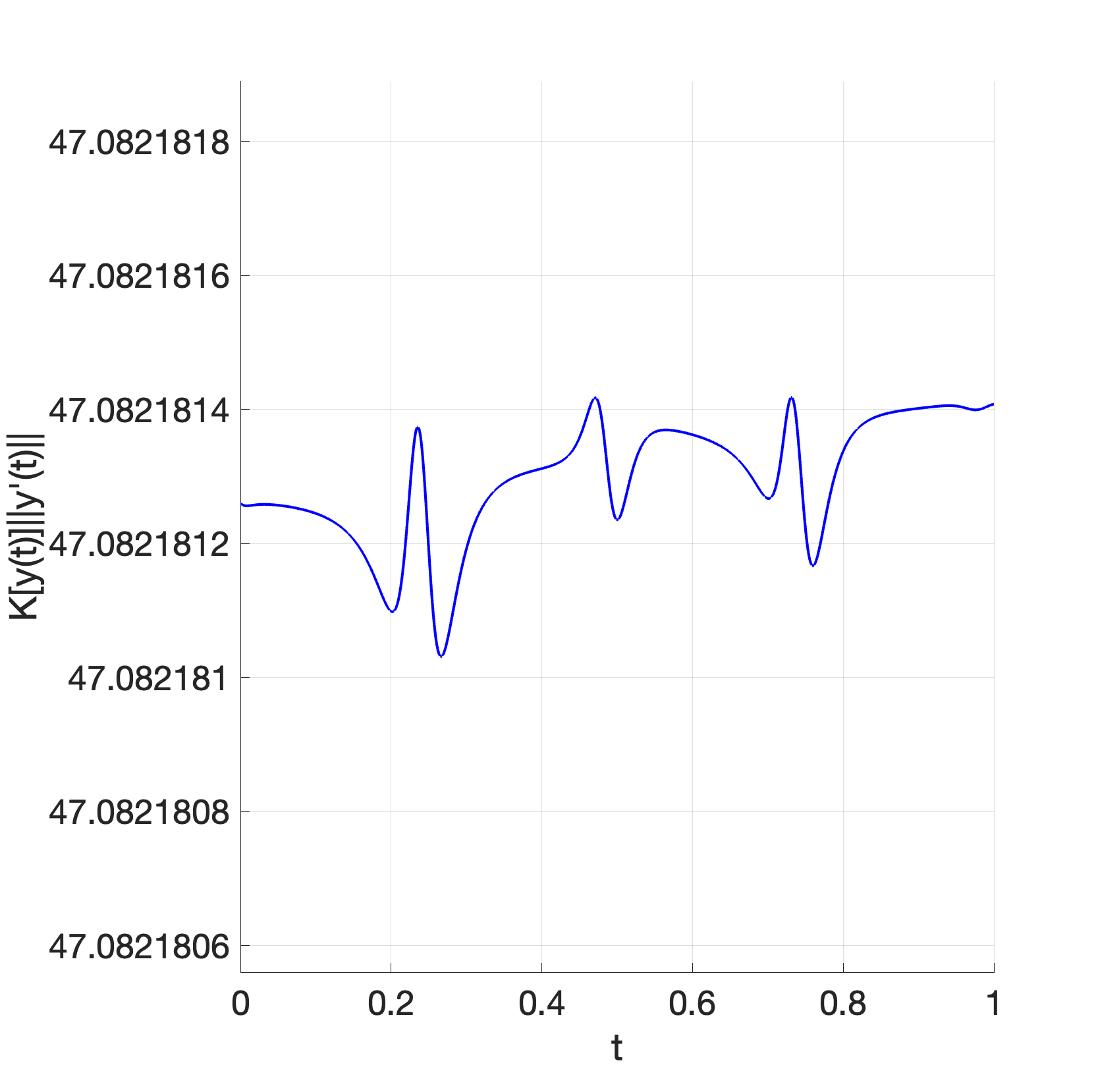}
		\caption{$K(\y) \| \dot \y\|$ values, range is $3.87*10^{-7}$}
	\end{subfigure}
	\caption{Necessary and sufficient conditions for Example \eqref{E2}}\label{fig:Cond_E2}
\end{figure}

\noindent{\bf Example \eqref{E3}}.  Figures \ref{fig:Path_E3} and \ref{fig:Cond_E3} present analogous results in this case. Once again, the energy-minimizing path satisfies necessary and sufficient conditions, and the deviation between the squared length and twice the energy is minimal, as is the difference between the 
lengths of the two solutions:
$$
\left|L(\z)^2 - 2E(\y)\right| = 6.27 \times 10^{-8}\ , \itext{and} 
\left|L(\z) - L(\y)\right| = 1.58 \times 10^{-8}\ .
$$

\begin{figure}[ht]
	\centering
	\begin{subfigure}{.49\textwidth}
		\centering
		\includegraphics[width=\linewidth]{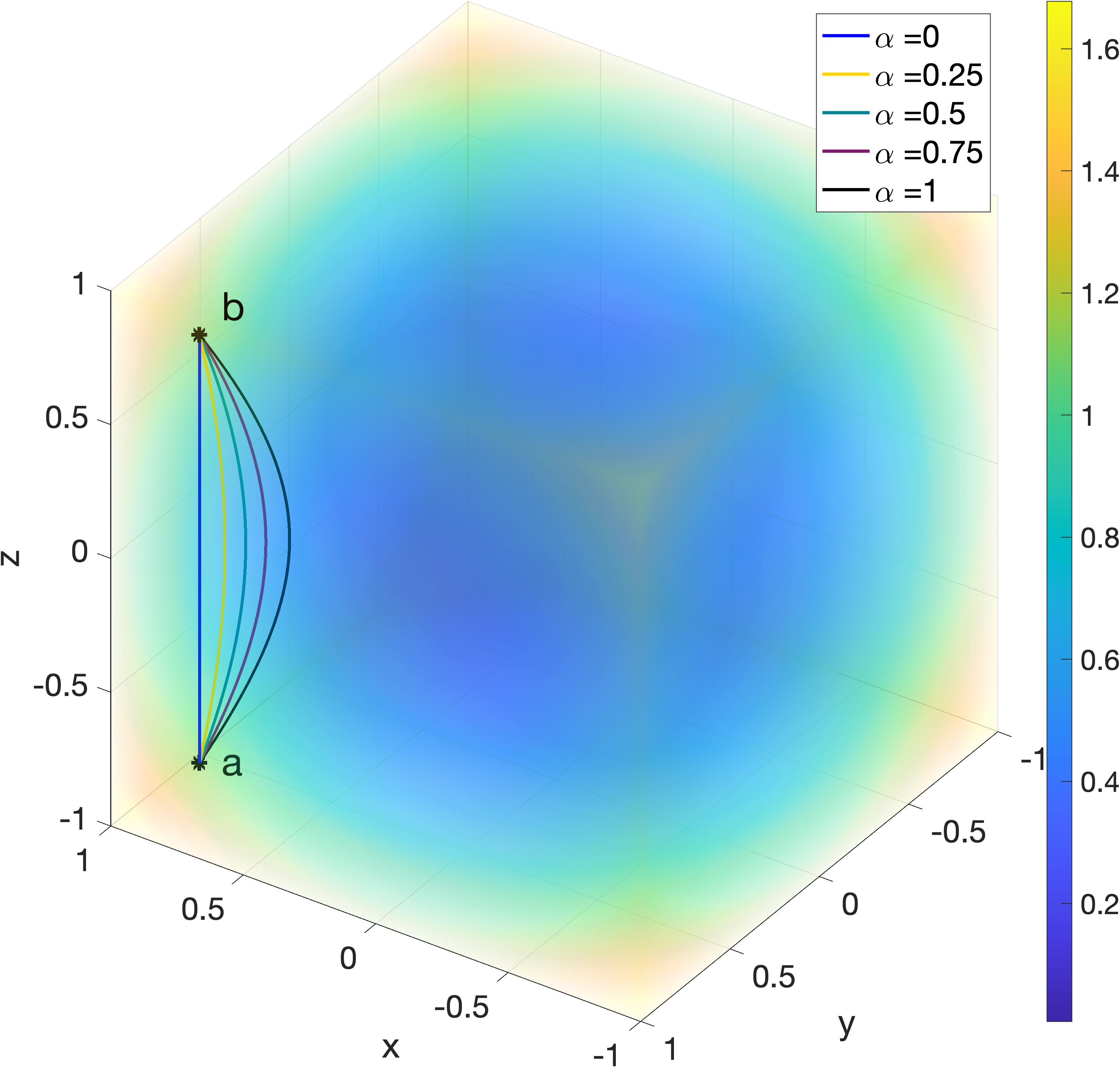}
		\caption{Energy solution}
	\end{subfigure}
	\begin{subfigure}{.49\textwidth}
		\centering
		\includegraphics[width=\linewidth]{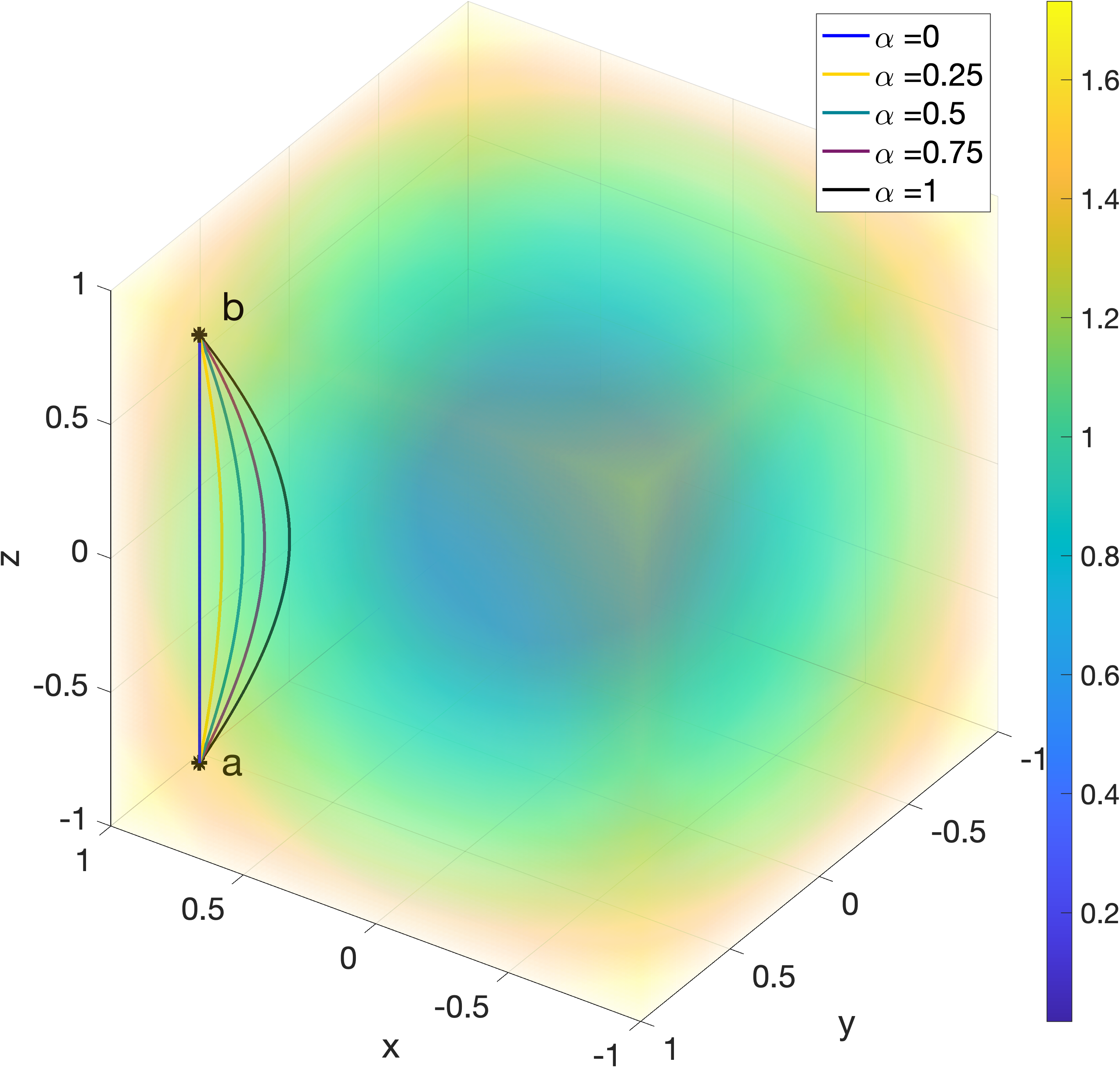}
		\caption{Length solution}
	\end{subfigure}
	\caption{Optimal paths for Example \eqref{E3}}\label{fig:Path_E3}
\end{figure}
\begin{figure}[ht]
	\centering
	\begin{subfigure}{.49\textwidth}
		\centering
		\includegraphics[width=\linewidth]{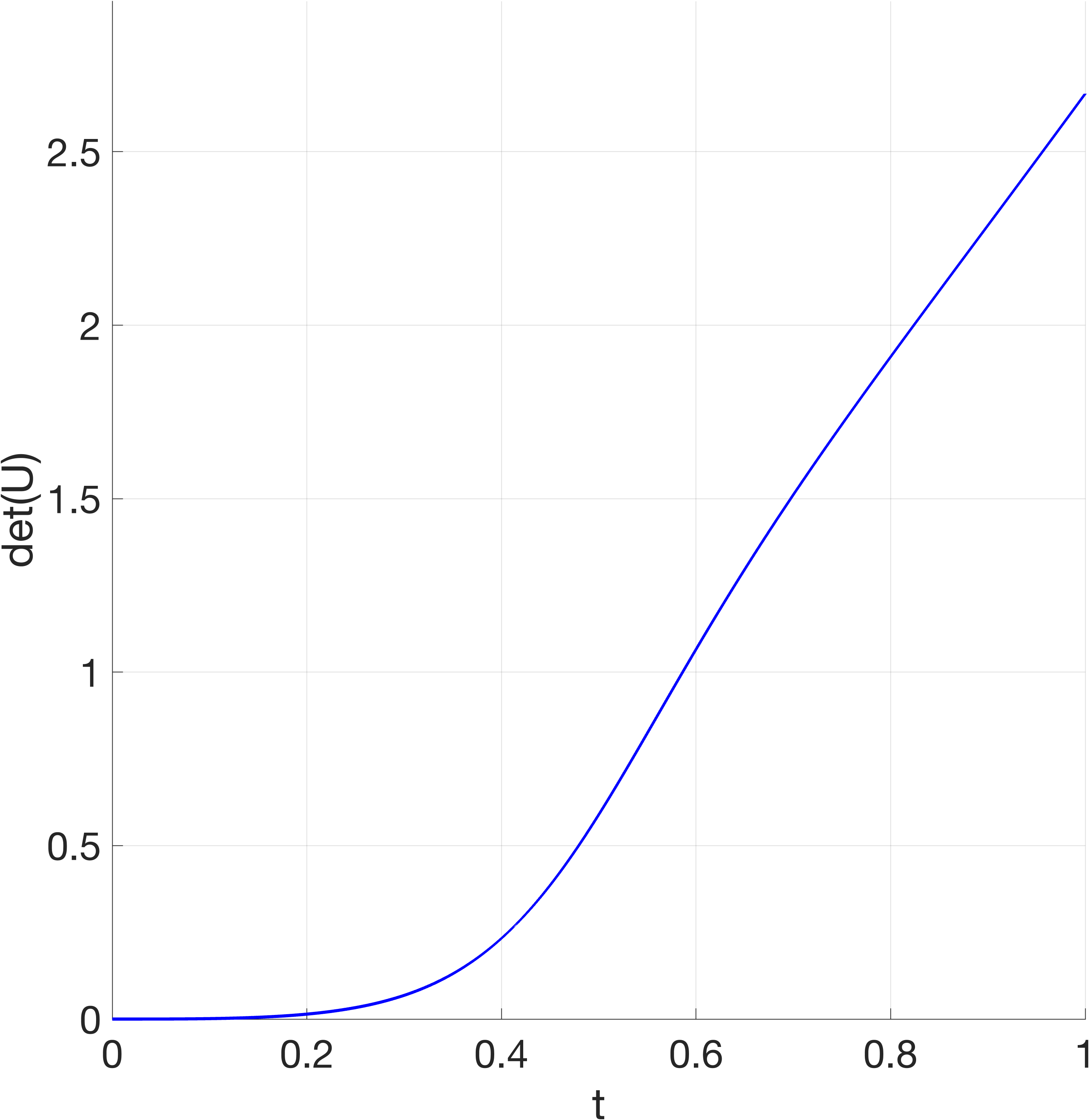}
		\caption{$\det(U)$ values, $\min_k\det(U_k) = 0$ at $t = 0$}
	\end{subfigure}
	\begin{subfigure}{.49\textwidth}
		\centering
		\includegraphics[width=\linewidth]{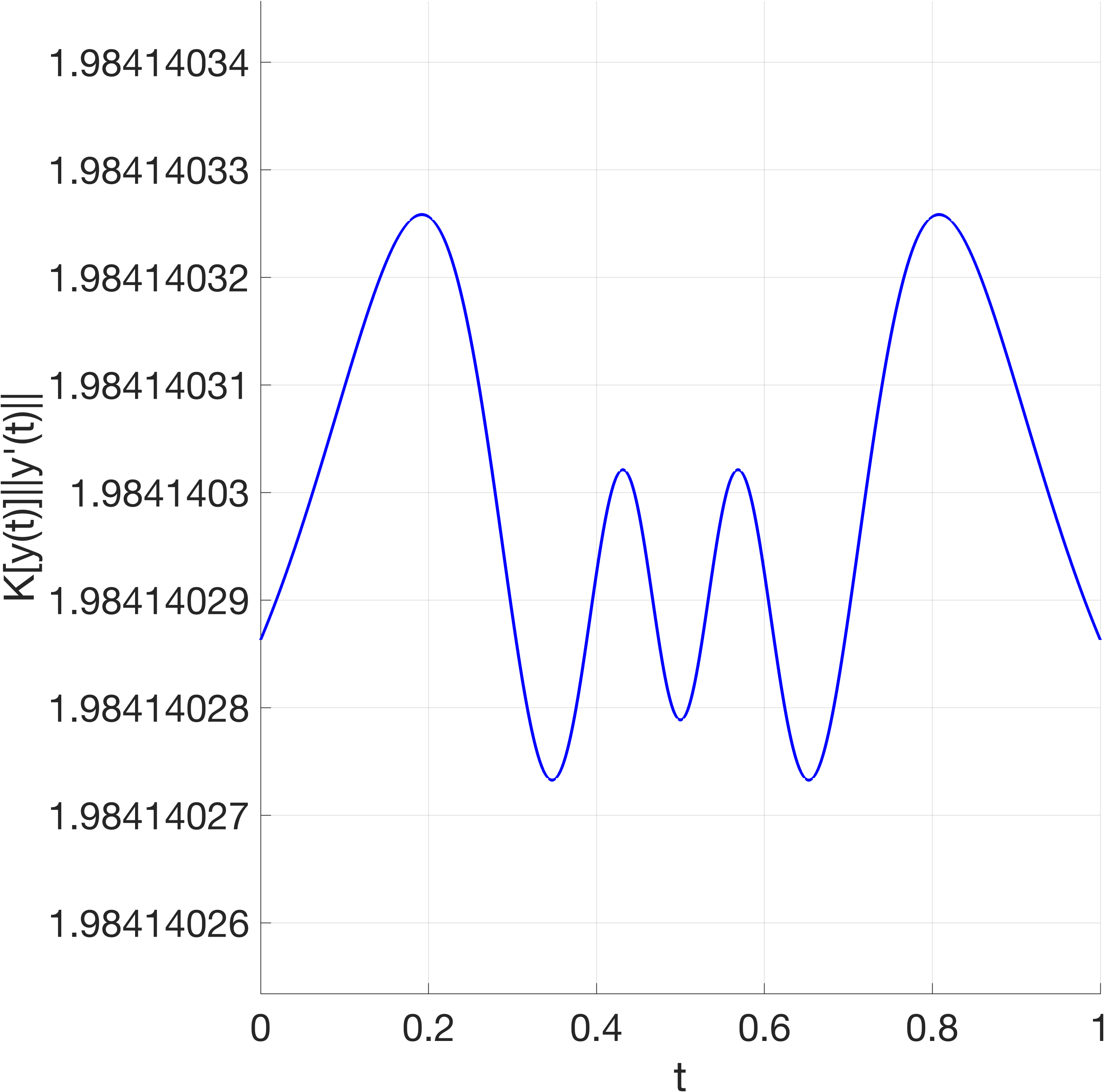}
		\caption{$K(\y) \| \dot \y\|$ values, range is $5.26*10^{-8}$}
	\end{subfigure}
	\caption{Necessary and sufficient conditions for Example \eqref{E3}}\label{fig:Cond_E3}
\end{figure}

\medskip
\subsection{Discrete OT Examples}
Next, we show examples of discrete optimal transport problems which can be solved both by the ``matching algorithm'' and by ``Sinkhorn method".  

It is important to stress that the equivalence between the curves of minimal length and minimal energy we observed in the previous sections does not imply that the optimal solutions obtained when solving the transport problems with one cost matrix, or the other, be the same. In fact, usually, they are not, as we now exemplify, because of the nonlinear relationship between the corresponding cost matrices.

Consider two problems, (\ref{E4} and \ref{E6}), formulated on a uniform mesh grid consisting of 9 points in the two-dimensional case and 8 points in the three-dimensional case with uniform masses. The masses are assumed to be uniform, with $\mu_i = \frac{1}{k}$, $\nu_j = \frac1k$, $\forall i, j$, where $k$ represents the number of points. To compute each entry of the cost matrix, 5 equispaced homotopy steps were used. In rare cases where the boundary value problem solver did not attain the specified tolerance, the number of homotopy steps was increased to a maximum of 51. For the sake of completeness, we point out that each entry of the cost matrix is computed by solving the relevant Euler-Lagrange equation, \eqref{EL-length} or \eqref{EL-energy} and that for the cost given by \eqref{energy-cost}, verification of optimality of all trajectories needed to setup the cost matrix was always successful.

\begin{gather}
    \mathbf{\Omega_X} = \bigg{[}-\frac{31}{10},-\frac{14}{5}\bigg{]}\times \bigg{[}-\frac{9}{10},0\bigg{]}\ ,\ \mathbf{\Omega_Y} = \bigg{[}-\frac94,\frac{13}{4}\bigg{]}\times \bigg{[}\frac14,\frac54\bigg{]}\ ,\ K(\x) = \frac{1}{\frac12+||\x||}\ ,\label{E4}\\
	\begin{split}
		&\mathbf{\Omega_X} = \bigg{[}-\frac{91}{100},-\frac{63}{100}\bigg{]}\times \bigg{[}\frac{63}{100},\frac{91}{100}\bigg{]} \times \bigg{[}-\frac{91}{100},-\frac{63}{100}\bigg{]}\ ,\\
		&\mathbf{\Omega_Y} = \bigg{[}\frac{59}{100},\frac{89}{100}\bigg{]}\times \bigg{[}\frac{59}{100},\frac{89}{100}\bigg{]} \times \bigg{[}\frac{59}{100},\frac{89}{100}\bigg{]}\ ,\ K(\x) = ||\x|| + \frac{1}{10}\ .
	\end{split}\label{E6}
\end{gather}

\subsubsection{Optimal Assignment Algorithm}\label{Opt_Assign}
In Figure \ref{fig:Assign_E4}, we see that the assignment plans obtained for the energy-based and length-based cost matrices in Example \eqref{E4} are the same. However, the assignment plan for Example \eqref{E6} differ, as shown in Figure \ref{fig:Assign_E6}.

\begin{figure}[ht]
	\centering
	\begin{subfigure}{.49\textwidth}
		\centering
		\includegraphics[width=\linewidth]{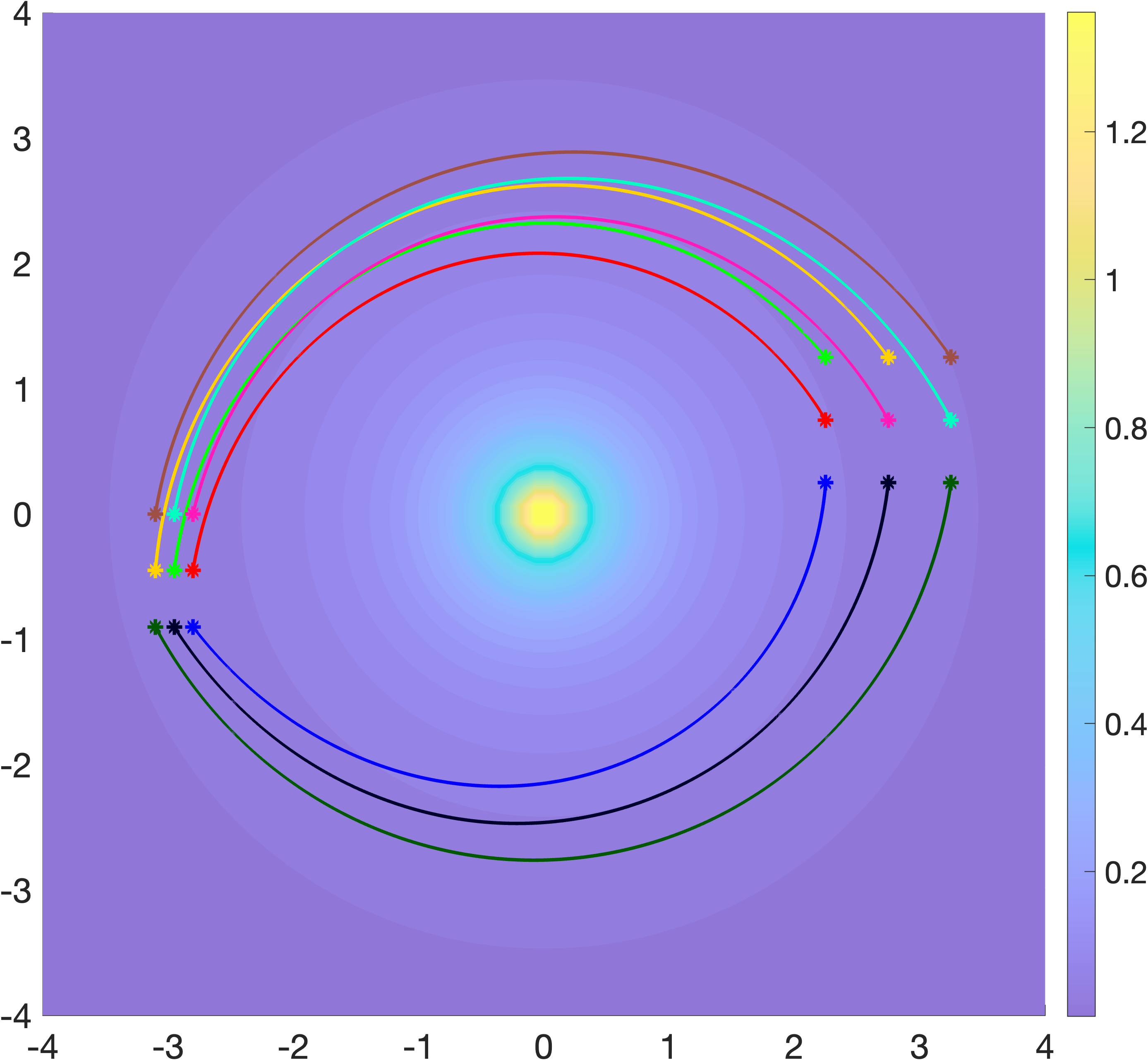}
		\caption{Energy solution, total cost is $2.9975$}
	\end{subfigure}
	\begin{subfigure}{.49\textwidth}
		\centering
		\includegraphics[width=\linewidth]{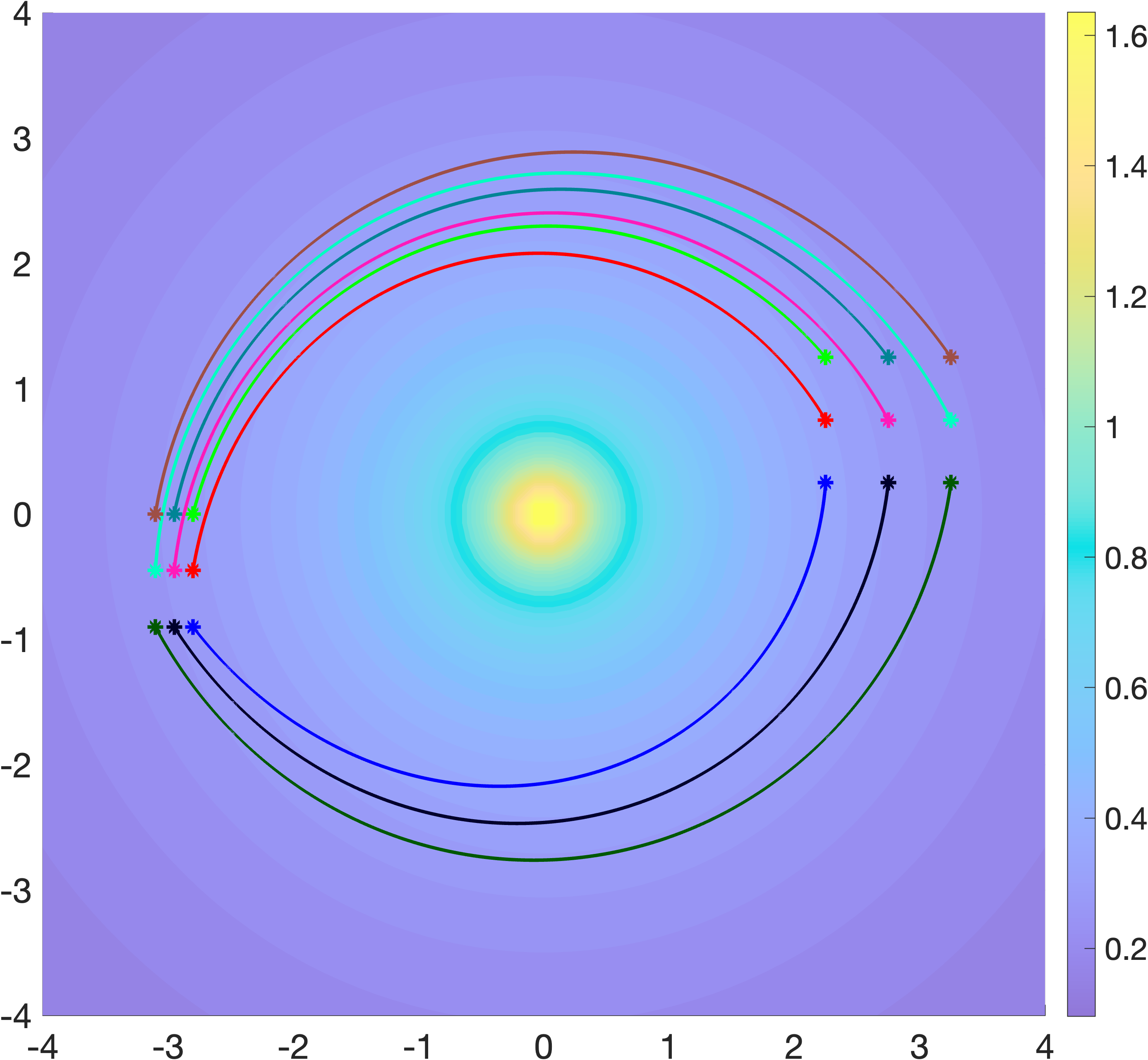}
		\caption{Length solution, total cost is $2.4478$}
	\end{subfigure}
	\caption{Optimal assignment for Example \eqref{E4}}\label{fig:Assign_E4}
\end{figure}

\begin{figure}[ht]
	\centering
	\begin{subfigure}{.49\textwidth}
		\centering
		\includegraphics[width=\linewidth]{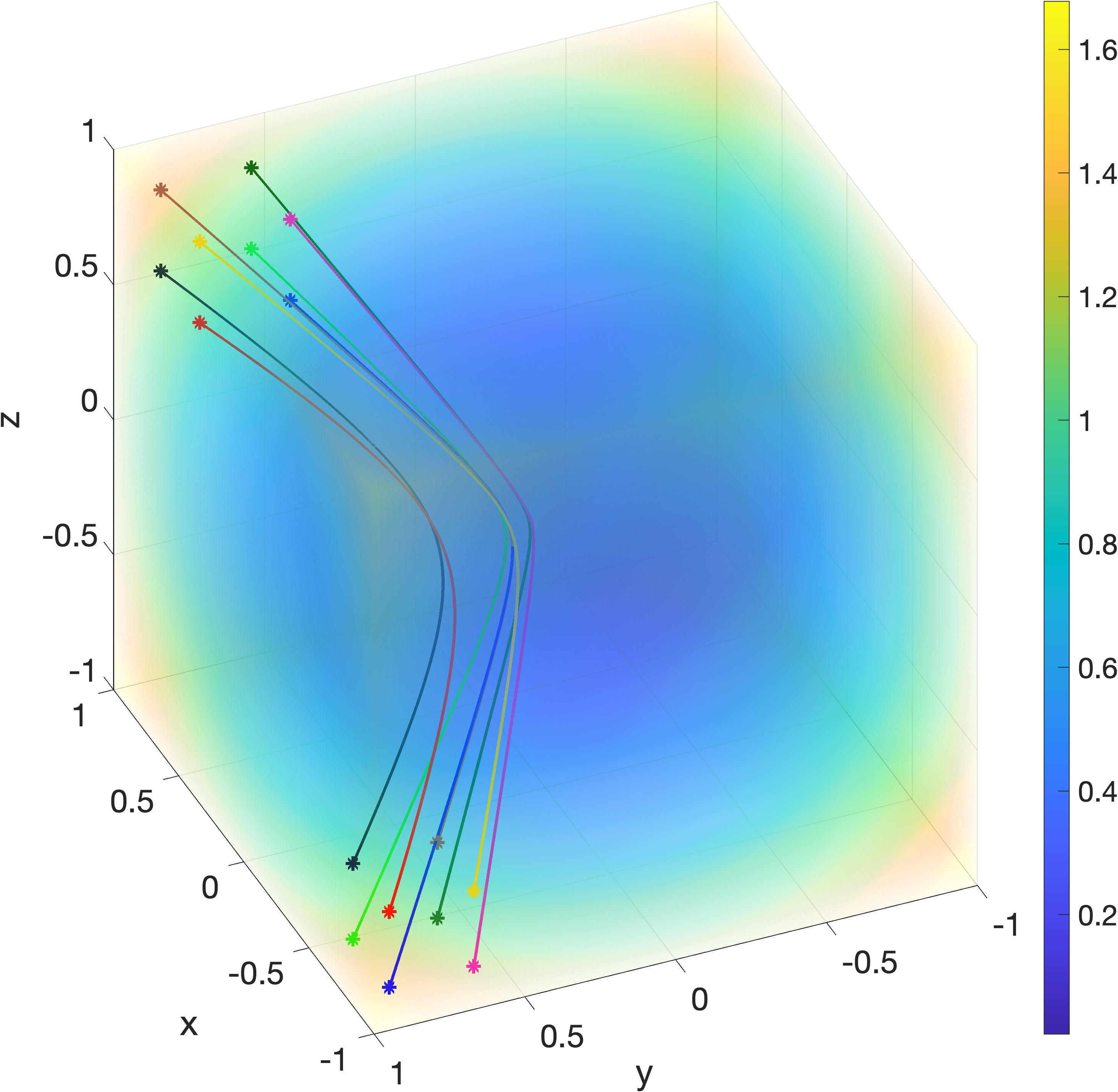}
		\caption{Energy solution, total cost is $2.0574$}
	\end{subfigure}
	\begin{subfigure}{.49\textwidth}
		\centering
		\includegraphics[width=\linewidth]{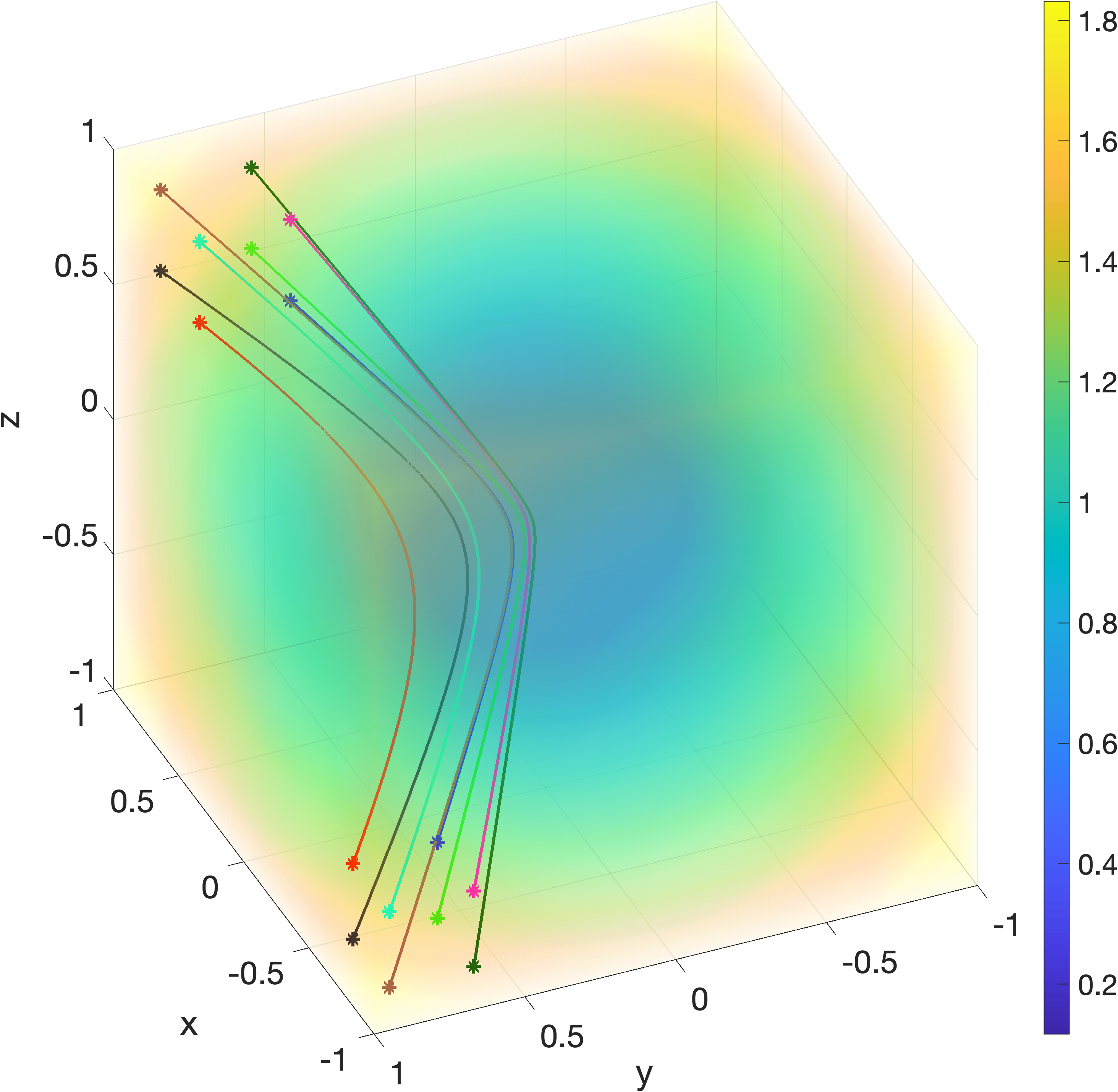}
		\caption{Length solution, total cost is $2.0262$}
	\end{subfigure}
	\caption{Optimal assignment for Example \eqref{E6}}\label{fig:Assign_E6}
\end{figure}

\subsubsection{Linear Programming Formulation: Sinkhorn Method}\label{Sinkhorn}
When using this formulation, beside the aforementioned difference in the transport solutions when using length or energy costs, we also witness a possible mass splitting, as expected.
Figure \ref{fig:Sinkhorn_E4} and Figure \ref{fig:Sinkhorn_E6} illustrate the optimal transport plans for Examples \eqref{E4} and \eqref{E6}. In these visualizations, the brightness of each line corresponds to the amount of mass transported along the respective path. As shown in Figures \ref{fig:Sinkhorn_E4a} and \ref{fig:Sinkhorn_E6a}, the transport plans derived from the energy-based cost matrix are sparser and more closely resemble an assignment mapping. In contrast, when using the length-based cost matrix, the Sinkhorn algorithm produces denser transport matrix. Consequently, the transported mass is more diffusely distributed among the target points, as evident in Figures \ref{fig:Sinkhorn_E4b} and \ref{fig:Sinkhorn_E6b}. These observations further underscore the nonlinear relationship between the cost matrices, whose entries are computed via either energy or length minimization.

Finally, we consider an example where there is an unequal number of number of points and nonuniform densities in the two domains $\mathbf{X}$ and $\mathbf{Y}$.  The setup resembles that of a semi-discrete Optimal Transport problem, and we are going to trace the boundary between different regions of the target space.  We have:
\begin{gather}
	\mathbf{X} = \left\{\icol{-\frac{5}{2} \\ 3},\ \icol{-2 \\ 3},\ \icol{-\frac{3}{2} \\ 3} \right\}\ ,\quad \mathbf{\Omega_Y} = \left[\frac{1}{2}, \frac{5}{2} \right] \times \left[\frac{3}{4}, \frac{11}{4} \right]\ ,\quad K(\mathbf{x}) = \|\mathbf{x}\| + \frac{1}{10}\ , \label{E7}
\end{gather}
where the target points are arranged on a uniform $10 \times 10$ grid over the domain $\mathbf{\Omega_Y}$. The source distribution $\mu$ over $\mathbf{X}$ is nonuniform, given by $\mu = \left(\frac{1}{4},\ \frac{1}{2},\ \frac{1}{4} \right)$, while the target distribution $\nu$ is uniform, with $\nu_j = \frac{1}{100}$ for $j = 1, 2, \dots, 100$.

Figure \ref{fig:Sinkhorn_E7} illustrates the Sinkhorn solution of Example \eqref{E7}, where the color of each target point is proportional to the amount of mass transported from each of the three source points in $\mathbf{X}$. As observed, the energy transport appears sparser, and a rough partitioning of $\mathbf{\Omega_Y}$ into regions influenced by each source point becomes visible. In contrast, the Sinkhorn solution based on the length-minimizing cost matrix results in a significantly more diffuse transport pattern.

\begin{figure}[ht]
	\centering
	\begin{subfigure}{.49\textwidth}
		\centering
		\includegraphics[width=\linewidth]{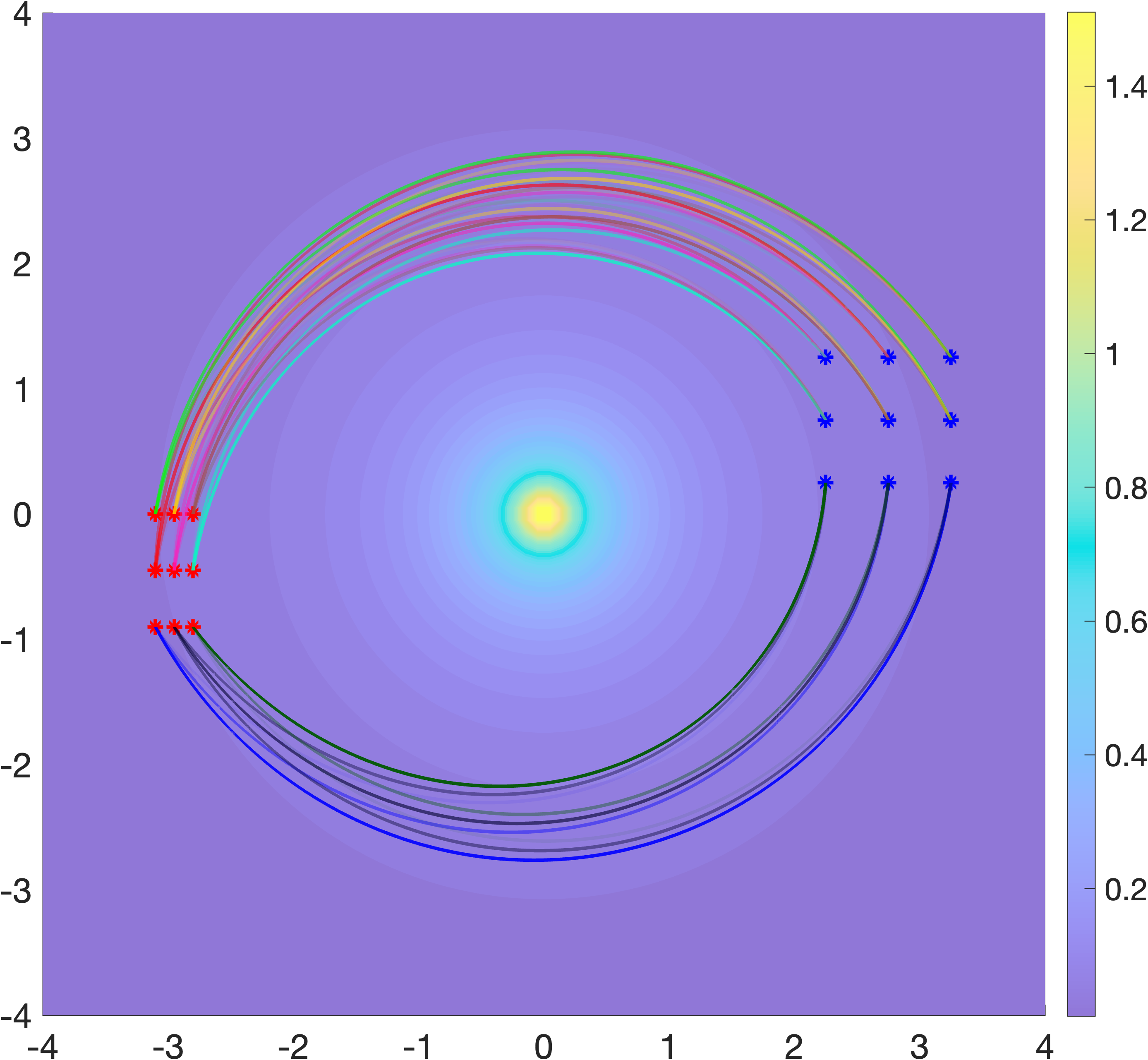}
		\caption{Energy solution, total cost is $2.9996$}
        \label{fig:Sinkhorn_E4a}
	\end{subfigure}
	\begin{subfigure}{.49\textwidth}
		\centering
		\includegraphics[width=\linewidth]{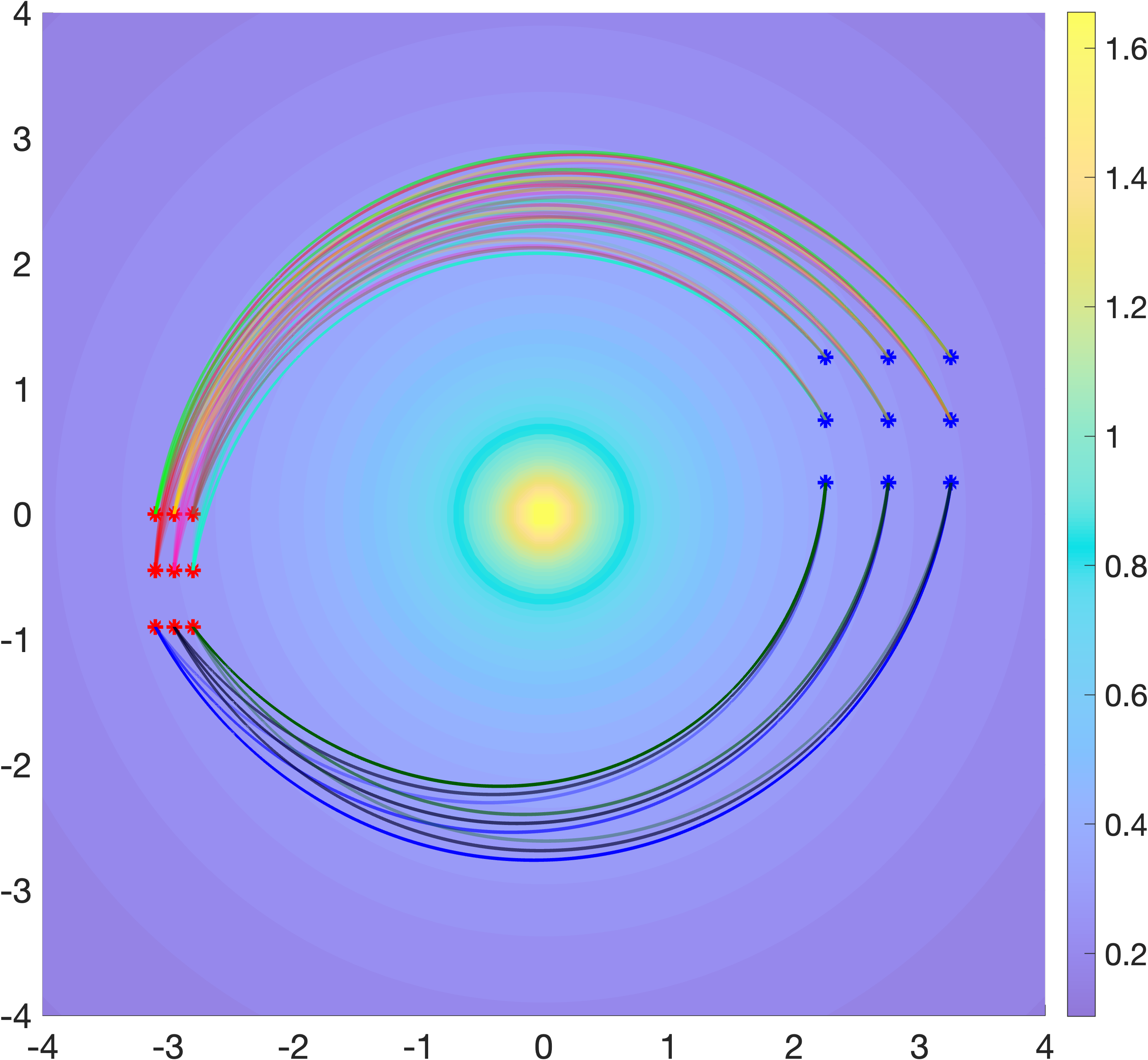}
		\caption{Length solution, total cost is $2.4490$}
        \label{fig:Sinkhorn_E4b}
	\end{subfigure}
	\caption{Sinkhorn solution for Example \eqref{E4}, $\epsilon = \frac{1}{200}$}\label{fig:Sinkhorn_E4}
\end{figure}

\begin{figure}[ht]
	\centering
	\begin{subfigure}{.49\textwidth}
		\centering
		\includegraphics[width=\linewidth]{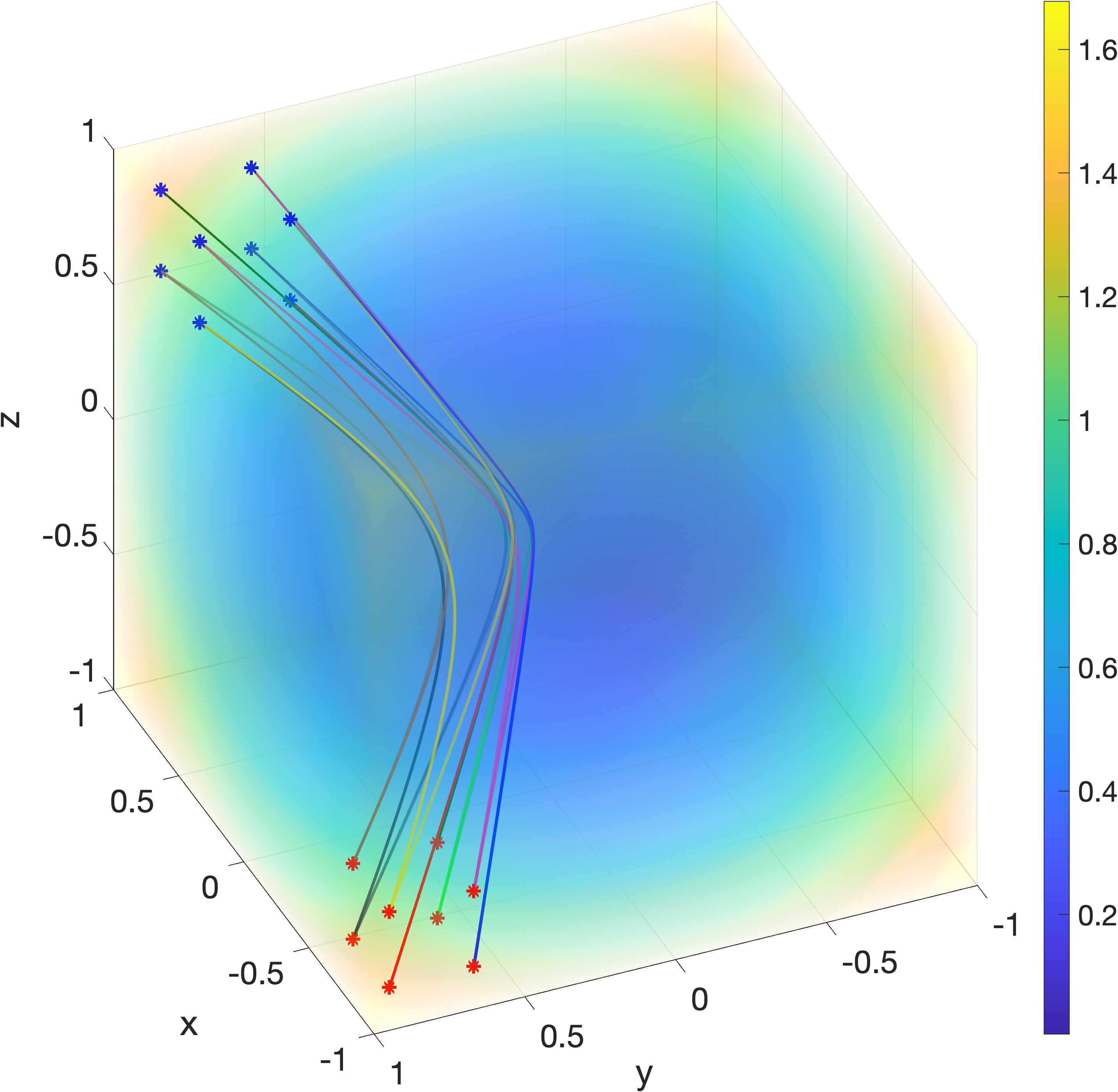}
		\caption{Energy solution, total cost is $2.0577$}
        \label{fig:Sinkhorn_E6a}
	\end{subfigure}
	\begin{subfigure}{.49\textwidth}
		\centering
		\includegraphics[width=\linewidth]{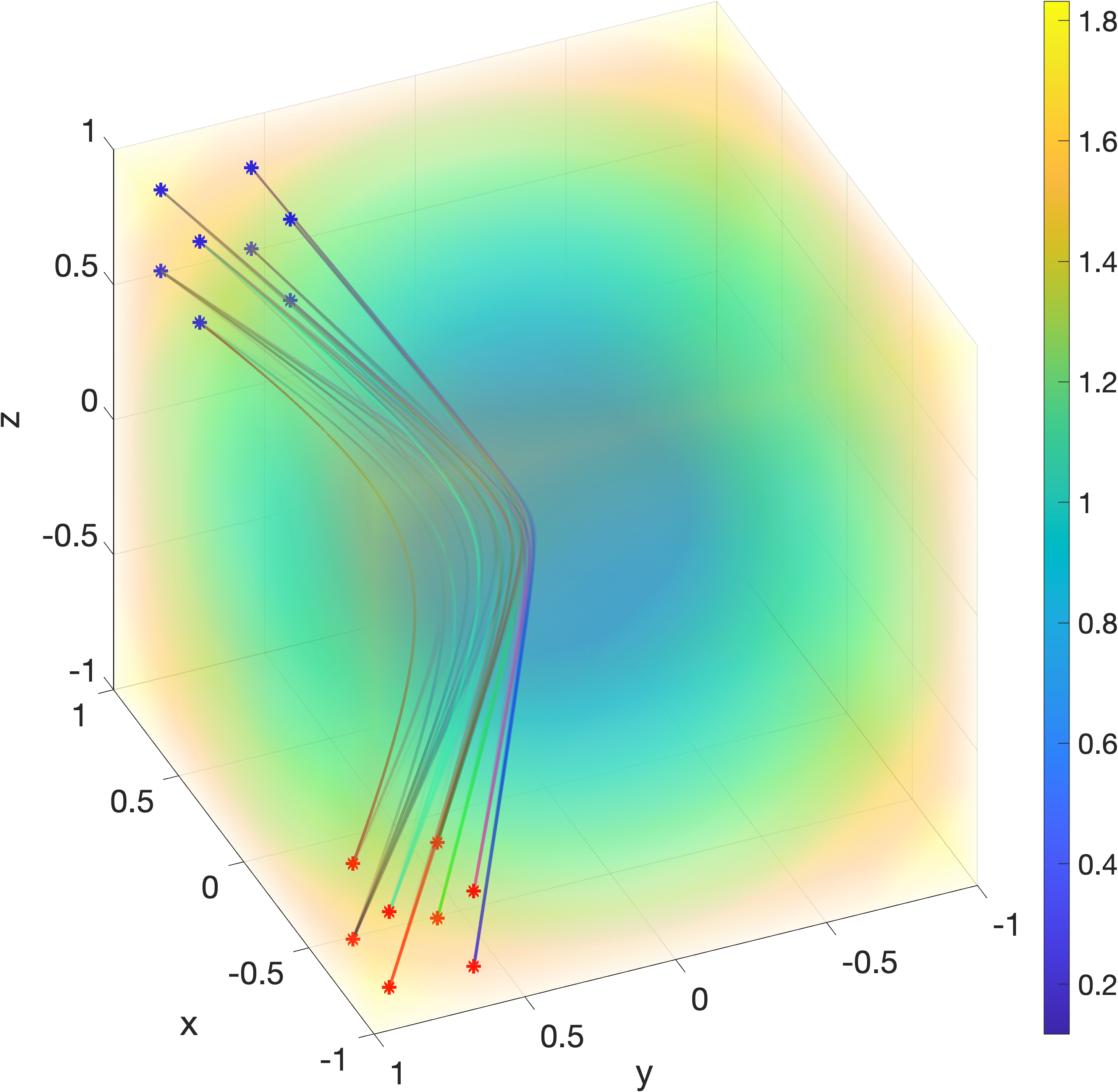}
		\caption{Length solution, total cost is $2.0278$}
        \label{fig:Sinkhorn_E6b}
	\end{subfigure}
	\caption{Sinkhorn solution for Example \eqref{E6}, $\epsilon = \frac{1}{250}$}\label{fig:Sinkhorn_E6}
\end{figure}

\begin{figure}[ht]
	\centering
	\begin{subfigure}{.49\textwidth}
		\centering
		\includegraphics[width=\linewidth]{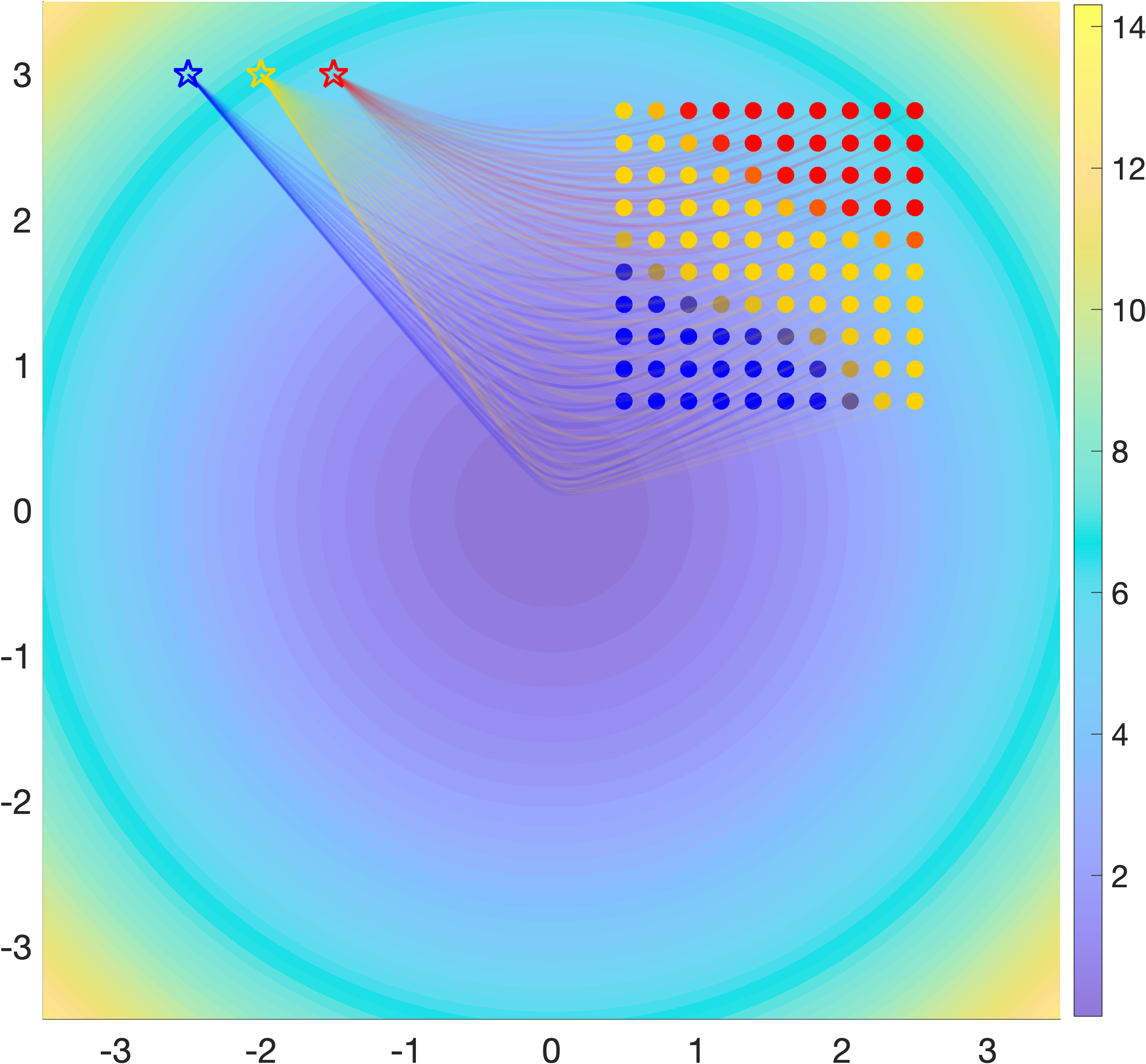}
		\caption{Energy solution, total cost is $44.935$}
        \label{fig:Sinkhorn_E7a}
	\end{subfigure}
	\begin{subfigure}{.49\textwidth}
		\centering
		\includegraphics[width=\linewidth]{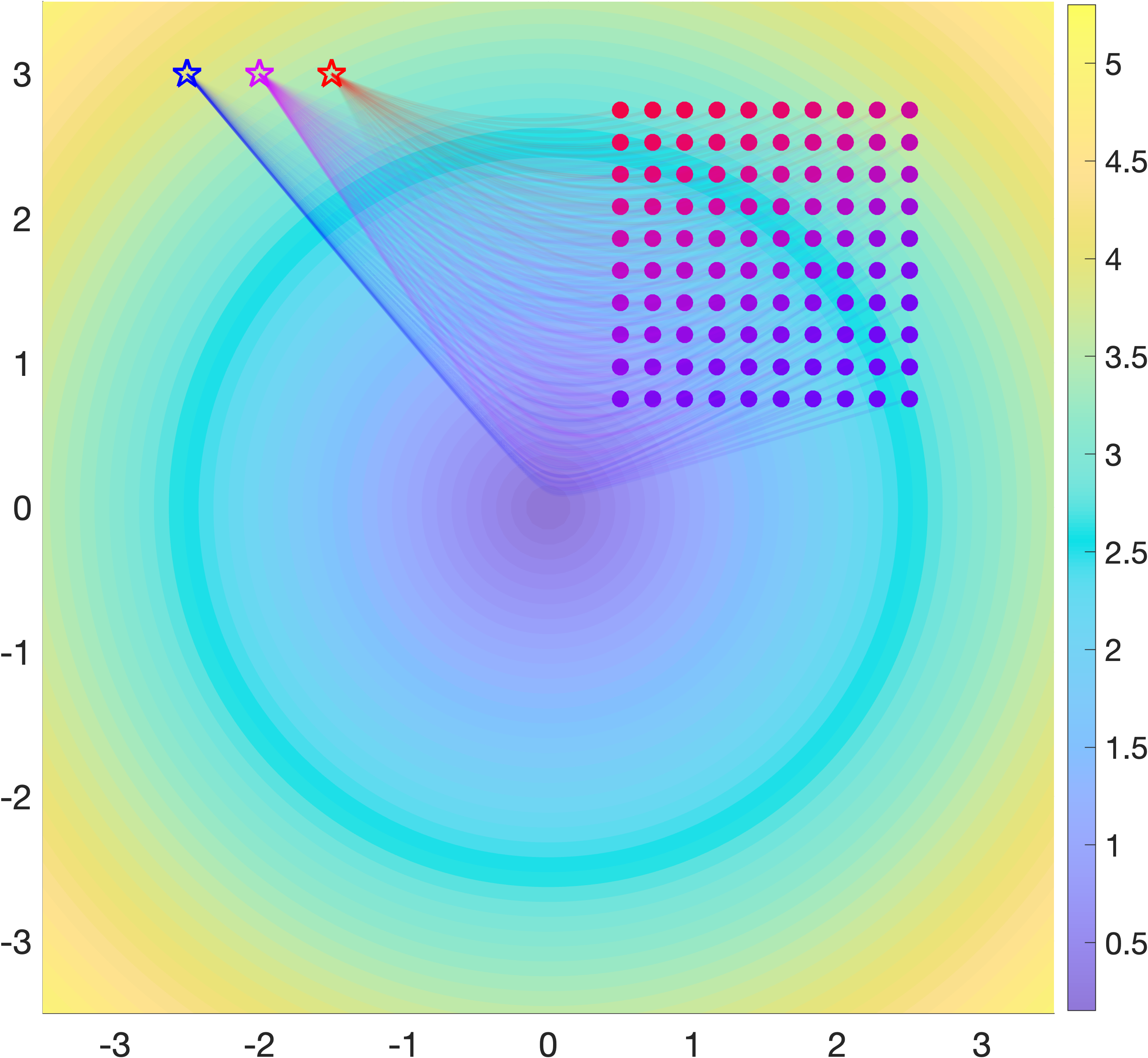}
		\caption{Length solution, total cost is $9.4193$}
        \label{fig:Sinkhorn_E7b}
	\end{subfigure}
	\caption{Sinkhorn solution for Example \eqref{E7}, $\epsilon = \frac{1}{5}$}\label{fig:Sinkhorn_E7}
\end{figure}

\section{Conclusions}\label{Concl}

In this work, our main effort has been directed toward forming the cost matrix underpinning the solution of a discrete optimal transport problem in a nonuniform environment.  From the mathematical modeling point of view, the non-uniformity of the environment reflects in having that the cost of moving one unit of mass from location $\ba$ to location $\bb$, call it $c(\ba, \bb)$,  is a weighted version of the standard length or energy costs; see \eqref{length-cost} and \eqref{energy-cost}.  As a consequence, there is no close expression for $c(\ba,\bb)$, and $c(\ba,\bb)$ must be found as the minimizer among all possible paths joining $\ba$ and $\bb$.   For this latter task, we considered, and solved, the associated Euler-Lagrange equations.   We further provided verifiable sufficient conditions of optimality of the path that we find when solving the Euler-Lagrange equations.  We proposed and implemented our algorithms, and gave evidence of their effectiveness and reliability on several numerical examples in 2 and 3 space dimensions.

\medskip

\subsection*{Declarations}

\noindent -- Ethical approval: Not applicable.\\
\noindent -- Availability of supporting data: Not applicable.\\
\noindent -- Funding: Not applicable.\\
\noindent -- Competing interests: The authors declare no competing interests.\\
\noindent -- Authors' contributions: L.D. and D.O. contributed equally to the manuscript..\\
\noindent -- Acknowledgements: Not applicable.

\subsection*{Acknowledgments}
D.O. gratefully acknowledge that this research was supported in part by the Pacific Institute for the Mathematical Sciences.

\bigskip

\bibliography{reflist}{}
\bibliographystyle{plain}

\end{document}